\documentclass[a4paper]{article}
\usepackage{amsmath,amsfonts,amssymb,amsthm}
\usepackage{graphicx}
\usepackage{tikz}
\usetikzlibrary{cd,positioning}
\usepackage{eucal}
\usepackage{makeidx}
\usepackage{stackrel}
\usepackage{thmtools}
\usepackage[colorlinks=true, allcolors=black, bookmarks=true,hypertexnames=false]{hyperref}
\usepackage{cleveref}

\theoremstyle{definition}
\newtheorem{definition}{Definition}[section]
\newtheorem{example}[definition]{Example}
\newtheorem{remark}[definition]{Remark}
\newtheorem{assumption}[definition]{Assumption}
\theoremstyle{plain}
\newtheorem{proposition}[definition]{Proposition}
\newtheorem{lemma}[definition]{Lemma}
\newtheorem{theorem}[definition]{Theorem}
\newtheorem{corollary}[definition]{Corollary}

\crefname{section}{Section}{Sections}
\crefname{definition}{Definition}{Definitions}
\crefname{proposition}{Proposition}{Propositions}
\crefname{lemma}{Lemma}{Lemmas}
\crefname{theorem}{Theorem}{Theorems}
\crefname{corollary}{Corollary}{Corollaries}
\crefname{remark}{Remark}{Remarks}
\crefname{example}{Example}{Examples}

\newcommand\Hom{\mathop{\mathrm{Hom}}\nolimits}
\newcommand\stHom{\mathop{\underline{\mathrm{Hom}}}\nolimits}
\newcommand\prHom{\mathop{\mathrm{Hom}^{\mathrm{pr}}}\nolimits}
\newcommand\End{\mathop{\mathrm{End}}\nolimits}
\newcommand\op{^\mathrm{op}}
\newcommand\Z{\mathbb{Z}}
\newcommand\derb{\mathop{D^b}}
\newcommand\modcat{\mathop{\mathrm{mod}}\nolimits}
\newcommand\stmodcat{\mathop{\underline{\mathrm{mod}}}\nolimits}
\newcommand\homob{\mathop{K^b}}
\newcommand\projcat{\mathop{\mathrm{proj}}}
\newcommand\addcat{\mathop{\mathrm{add}}}
\newcommand{\syzygy}[1][]{%
  \if\relax\detokenize{#1}\relax
    \mathop{\Omega}
  \else
    \mathop{\Omega^{\scriptstyle #1}}
  \fi
}

\title{%
Two-sided tilting complexes over symmetric algebras sending simple modules to shifts of modules  
}
\author{Shuji Fujino \and Yuta Kozakai}
\date{}
\begin{document}
\maketitle
\begin{abstract}
  For a tilting complex over a finite-dimensional symmetric algebra over an algebraically closed field, whose distinct graded components have no indecomposable projective direct summands in common,
  we explicitly (re)construct a two-sided tilting complex that corresponds to the tilting complex, starting from a bimodule inducing a stable equivalence of Morita type.
\end{abstract}
\section{Introduction}
Let $k$ be an algebraically closed field and $A$ and $B$ finite dimensional  symmetric $k$-algebras. One of the central topics in the representation theory of finite dimensional algebras is to determine whether two algebras $A$ and $B$ are derived equivalent or not. In \cite{Rickard19896}, Rickard showed that the algebras $A$ and $B$ are derived equivalent if and only if there exists a tilting complex $T$ of $A$-modules whose endomorphism algebra is isomorphic to the algebra $B$.

On the other hand, we say that a complex $C$ of $(B,A)$-bimodules is a two-sided tilting complex if the functor ${-} \otimes_{B}^{\mathbb L} C: \derb{(B)}\to\derb{(A)}$ induces a triangulated equivalence.
In \cite{Rickard1991, Keller1993},  Rickard and Keller showed that for the tilting complex $T$,  there theoretically exists a two-sided tilting complex $C$ of $(B, A)$-bimodules whose restriction to $A$ is isomorphic to the tilting complex $T$ in the derived category of $A$-modules.  Such equivalences sometimes make it easier to track the correspondences of complexes in derived categories.
This can be one of the motivations to construct two-sided tilting complexes from one-sided tilting complexes.

Stable equivalences of Morita type were introduced by Brou\'e in \cite{Broue1994}. They are defined by the existence of certain bimodules whose tensor functors induce equivalences between stable categories.
Rickard showed in \cite{Rickard1991} that derived equivalences induce stable equivalences of Morita type by taking minimal projective resolutions of two-sided tilting complexes. Consequently, for symmetric algebras, stable equivalences of Morita type are weaker than derived equivalences.

A natural problem is how to explicitly construct a two-sided tilting complex realizing a given stable equivalence of Morita type for concrete symmetric algebras. Although one-sided tilting complexes are often constructed explicitly in practice, analogous explicit constructions for two-sided tilting complexes are sometimes much less understood. Developing such constructions enables us to give a concrete description of the induced derived equivalences and to investigate derived equivalences between algebras.

There have been some studies on constructing two-term two-sided tilting complexes from bimodules inducing stable equivalences of Morita type by taking projective covers of bimodules, although their primary goal is not necessarily to obtain explicit two-sided tilting complexes. For example, Rouquier constructed a two-term two-sided tilting complex for blocks of group algebras with cyclic defect groups in order to
give a character correspondence in \cite{Rouquier1995}.

Moreover, in \cite{Okuyama2000}, Okuyama showed that tensoring successively with the constructed two-sided tilting complexes corresponds to the derived equivalences obtained by Okuyama's method described in \cite{okuyama1998derived}.

There are studies aimed at constructing explicit two-sided tilting complexes for specific classes of algebras, such as Brauer tree algebras \cite{Kozakai_Kunugi_2018, Kozakai2019}, and generalized Brauer tree algebras \cite{Fujino_Kozakai_Takamura_2025}, which are determined by combinatorial graph structures. These studies respectively start from specific (not necessarily two-term) tilting complexes constructed by Rickard for Brauer tree algebras \cite{Rickard198911}, by Rickard--Schaps for Brauer tree algebras \cite{Rickard_Schaps_2002}, and by Membrillo-Hern\'{a}ndez for generalized Brauer tree algebras \cite{MembrilloHernandez1997}, together with bimodules inducing stable equivalences of Morita type.

Motivated by these results, we aim to give an explicit description of two-sided tilting complexes corresponding to given tilting complexes under suitable assumptions.
Our approach is to construct such complexes by adding appropriate projective bimodules to each graded component of a stalk complex inducing a stable equivalence of Morita type.

Then, in this paper, we start with two algebras
$A$ and $B$ that are derived equivalent via a tilting complex $T$ of $A$-modules. Rather than asking whether the two algebras are derived equivalent, we investigate how this derived equivalence can be realized by giving an explicit construction of a two-sided tilting complex whose unique graded component induces a stable equivalence of Morita type.
We show that if the tilting complex
$T$ has no indecomposable projective summands in common among its different degrees, then we can construct a two-sided tilting complex by deleting certain direct summands from a minimal projective resolution of a bimodule inducing a specific stable equivalence of Morita type (see \cref{s:main}).
It is required that the images of simple modules under tensoring with the bimodule coincide with those images from the derived equivalence induced by the tilting complex $T$.
Since each graded component of the two-sided tilting complex is projective on both sides, the tensor functor directly induces a derived equivalence.
We remark that any two-term tilting complex, Rickard tree-to-star tilting complex, Rickard--Schaps tree-to-star tilting complex, Membrillo-Hern\'{a}ndez tree-to-star tilting complex has no indecomposable projective summands in common among its different degrees.
We also prove that the restriction of the constructed two-sided tilting complex to $A$ is isomorphic to the tilting complex
$T$ with the $A$-action twisted by an automorphism of the algebra $A$ (see \cref{thm:dualtwosided}).

\section{Notation}
Throughout this paper, $\Gamma$ and $\Lambda$ mean finite dimensional indecomposable symmetric algebras over an algebraically closed field $k$. Let $\Gamma^{\op}$ denote the opposite algebra of $\Gamma$. All $\Gamma$-modules are finitely generated right $\Gamma$-modules unless otherwise stated. We identify $(\Lambda,\Gamma)$-bimodules with $\Lambda^{\op}\otimes_k \Gamma$-modules, which  restricts to $\Lambda^{\op}$-modules or $\Gamma$-modules.
Given a $\Gamma$-module $U$, we denote by $P(U)$ a projective cover of $U$, by $\syzygy[]U$ the kernel of a projective cover of $U$, and by $\syzygy[-1] U$ the cokernel of an injective hull of $U$. We define inductively $\syzygy[n+1] U=\syzygy (\syzygy[n] U)$ and $\syzygy[-n-1] U=\syzygy[-1](\syzygy[-n] U)$ for each integer $n\ge 1$ and a $\Gamma$-module $U$.
We denote by $U^{\ast}:=\Hom_{\Gamma}(U,\Gamma)$ the dual module of a $\Gamma$-module $U$. This is a $\Gamma^{\op}$-module defined by $(af)(u)=f(ua)$ for all $a\in \Gamma^{\op}$, $f\in U^{\ast}$ and $u\in U$. We note that $V^{\ast}\otimes_k U$ is a $(\Lambda,\Gamma)$-bimodule for a $\Gamma$-module $U$ and {a} $\Lambda$-module $V$.
For automorphisms $\alpha, \beta$ of the algebra $\Gamma$, we denote ${}_{\alpha}\Gamma_{\beta}$ by $\Gamma^{\op}\otimes_k \Gamma$-module defined by $x(a\otimes_k b)=\alpha(a)x\beta(b)$ for $x\in \Gamma$ and $a\otimes_k b \in \Gamma^{\op}\otimes_k\Gamma$. We note that ${({}_{\alpha}\Gamma_{\beta})}^{\ast}$ is isomorphic to ${}_{\beta}\Gamma_{\alpha}$.
Let $\{C^n\}_{n\in\mathbb Z}$ be a family of $\Gamma$-modules indexed by $\Z$.
Let $d^n$ be a $\Gamma$-homomorphism from $C^n$ to $C^{n+1}$, which satisfies $d^{n+1}\circ d^n=0$ for each integer $n$. Then $C=(C^n, d^n)_{n\in \Z}$ is a {complex} of $\Gamma$-modules.
We define $C^{\ast}$ to be $({(C^{-n})}^{\ast}, \Hom_{\Gamma}(d^{-n-1},\Gamma):{(C^{-n})}^{\ast}\to {(C^{-n-1})}^{\ast})_{n\in \mathbb Z}$. The dual complex of $C$ is a complex of $\Gamma^{\op}$-modules.
We denote by $C[n]$ the $n$-shifted complex of $C$ and by $H^n(C)$ the $n$th cohomology of $C$ for each $n$.

We denote the category of finitely generated right $\Gamma$-modules by $\modcat \Gamma$, the full subcategory of $\modcat \Gamma$ whose objects are finitely generated right projective $\Gamma$-modules by $\projcat \Gamma$,
the bounded homotopy category of $\projcat \Gamma$ by $\homob(\projcat \Gamma)$,
the bounded homotopy category of $\modcat \Gamma$ by $\homob(\modcat \Gamma)$,
the bounded derived category of $\modcat \Gamma$ by $\derb (\Gamma)$, and the stable {module} category of $\Gamma$ by $\stmodcat{\Gamma}$.
We denote by $\addcat(T)$ the smallest full subcategory of $\homob(\projcat \Gamma)$ containing a complex $T$ closed under isomorphisms, direct sums and direct summands.

Given a set $X$, we denote by $\delta_{ij}$ the Kronecker delta for $i, j \in X$ given by \[\delta_{ij}=\begin{cases}
    1 & \text{if $i=j$,}     \\
    0 & \text{if $i\neq j$}.
  \end{cases}\]

\section{Preliminaries}\label{s:equivalences}
\subsection{Tilting complexes}\label{s:tilting}
Tilting complexes are important to consider derived equivalences. We describe them in this subsection.
Let $k$ be an algebraically closed field and $\Gamma$ and $\Lambda$ two finite dimensional indecomposable symmetric $k$-algebras. We say that $\Gamma$ and $\Lambda$ are derived equivalent if $\derb(\Gamma)$ and $\derb(\Lambda)$ are equivalent as triangulated categories.
\begin{definition}
  We call a bounded complex $T$ of projective $\Gamma$-modules a \emph{tilting complex} if the following conditions are satisfied:
  \begin{itemize}
    \item $\Hom_{\homob (\projcat \Gamma)}(T, T[n])=0$ for any non-zero integer $n$.
    \item The subcategory $\addcat(T)$ generates $\homob (\projcat \Gamma)$ as a triangulated category.
  \end{itemize}
\end{definition}
The second condition means that $\Gamma$ is obtained by applying a finite sequence of operations for $T$, including taking direct sums, direct summands, mapping cones, and shifts. We recall the definition of two-sided tilting complexes.
\begin{definition}[{\cite[Definition 3.4]{Rickard1991}}]
  We call a bounded complex $C$ of $\Lambda^{\op}\otimes_k \Gamma$-modules a \emph{two-sided tilting complex} if
  ${-} \otimes_\Lambda^{\mathbb L} C$
  is a triangulated functor $\derb(\Lambda)\to \derb(\Gamma)$ which induces an equivalence.
\end{definition}
We note that ${-} \otimes_\Lambda^{\mathbb L} C$ is the left derived functor for ${-} \otimes_\Lambda C$.
This definition of $C$ is equivalent to saying that $C^{\ast}\otimes_{\Lambda}^{\mathbb L}C$ is isomorphic to $\Gamma$ in $\derb{(\Gamma^{\op}\otimes_k \Gamma)}$ and $C\otimes_{\Gamma}^{\mathbb L}C^{\ast}$ is isomorphic to $\Lambda$ in $\derb{(\Lambda^{\op}\otimes_k \Lambda)}$.
The following proposition holds.
\begin{proposition}[{\cite[Sec.~9.2.2]{Rickard1998}}]\label{prop:omittingL}
  Let $C$ be a two-sided tilting complex of $\Lambda^{\op}\otimes_k \Gamma$-modules  which are projective when seen as $\Lambda^{\op}$-modules and $\Gamma$-modules. Then ${-} \otimes_\Lambda^{\mathbb L} C$ is equivalent to ${-} \otimes_\Lambda C$ as a functor.
\end{proposition}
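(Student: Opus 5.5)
The plan is to reduce the statement to the fact that the derived tensor product may be computed by a K-projective (or, more generally, K-flat) replacement, and that $C$ is itself already adequate for this purpose as a complex of right $\Lambda$-modules. Recall that ${-}\otimes_\Lambda^{\mathbb L} C$ is defined on $X\in\derb(\Lambda)$ by choosing a projective resolution $pX\to X$ (a bounded-above complex of projective $\Lambda$-modules quasi-isomorphic to $X$) and setting $X\otimes_\Lambda^{\mathbb L}C=pX\otimes_\Lambda C$. Equivalently, because $C$ is bounded and each $C^n$ is projective as a left $\Lambda$-module (i.e.\ as a $\Lambda^{\op}$-module), one may resolve in the second variable instead. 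So I will show that the natural map $pX\otimes_\Lambda C\to X\otimes_\Lambda C$ is a quasi-isomorphism for every bounded complex $X$. This gives a natural isomorphism ${-}\otimes_\Lambda^{\mathbb L}C\cong {-}\otimes_\Lambda C$ of functors on $\derb(\Lambda)$, and in particular shows that ${-}\otimes_\Lambda C$ preserves quasi-isomorphisms and so descends to $\derb(\Lambda)$.

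The key steps are as follows. First, each $C^n$ is finitely generated projective as a left $\Lambda$-module, so ${-}\otimes_\Lambda C^n$ is an exact functor on $\modcat\Lambda$. Second, let $f\colon pX\to X$ be the resolution and $\mathrm{cone}(f)$ its mapping cone, which is acyclic and bounded above. Then $\mathrm{cone}(f)\otimes_\Lambda C$ is the total complex of the double complex $\mathrm{cone}(f)^i\otimes_\Lambda C^j$. Since $C$ is bounded, only finitely many columns $j$ occur, and each column $\mathrm{cone}(f)\otimes_\Lambda C^j$ is acyclic by exactness of ${-}\otimes_\Lambda C^j$. A finite filtration by columns (the stupid truncations $\sigma_{\ge j}C$ give short exact sequences of complexes), followed by induction on the length of $C$, then shows that the total complex is acyclic. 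Hence $f\otimes_\Lambda C$ is a quasi-isomorphism. Third, naturality in $X$ is immediate because the comparison map is induced by $f$ and is compatible with morphisms of resolutions.

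The only point needing care is boundedness in the totalization, since $pX$ is only bounded above. Because $C$ has only finitely many nonzero terms, the total complex involves direct sums with finitely many summands in each degree. The column filtration is therefore finite, and no convergence issue arises. The projectivity of $C^n$ on the $\Gamma$ side is not needed for this statement. It matters only later, to guarantee that the image lies in $\homob(\projcat\Gamma)$ when $X$ is perfect, which is presumably why the hypothesis is stated symmetrically. I expect this step to be the main, though mild, obstacle; everything else is formal.
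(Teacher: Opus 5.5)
Your proof is correct. The paper gives no argument of its own here and only cites Rickard; your argument is the standard one behind that reference. Termwise left $\Lambda$-projectivity and the boundedness of $C$ make ${-}\otimes_\Lambda C$ send acyclic bounded-above complexes to acyclic complexes, so the comparison map $pX\otimes_\Lambda C\to X\otimes_\Lambda C$ is a natural isomorphism. You are also right that neither the $\Gamma$-side projectivity nor the tilting hypothesis is needed for this statement.
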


The following proposition establishes a relationship between tilting complexes, two-sided tilting complexes, and derived equivalences.

\begin{proposition}[{\cite[Theorem 6.4 and Corollary 8.3]{Rickard19896}, \cite[Theorem 3.3]{Rickard1991}, \cite[Theorem]{Keller1993}}]\label{prop:Rickard}
  The following conditions are equivalent:
  \begin{itemize}
    \item The $k$-algebra $\Gamma$ is derived equivalent to $\Lambda$.
    \item There exists a tilting complex $T$ of $\Gamma$-modules such that $\End_{\homob(\projcat{\Gamma})}(T)$ is isomorphic to $\Lambda$ as a $k$-algebra.
    \item There exists a two-sided tilting complex $C$ of $\Lambda^{\op}\otimes_k \Gamma$-modules.
  \end{itemize}
\end{proposition}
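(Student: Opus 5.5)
The statement just above is \cref{prop:Rickard}, which collects the theorems of Rickard and Keller. I would prove it as a cycle of three implications: derived equivalence $\Rightarrow$ tilting complex $\Rightarrow$ two-sided tilting complex $\Rightarrow$ derived equivalence.

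The last implication is immediate from the definition. If $C$ is a two-sided tilting complex, then ${-}\otimes^{\mathbb L}_\Lambda C$ is a triangulated equivalence $\derb(\Lambda)\to\derb(\Gamma)$.

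For the first implication, start from a triangulated equivalence $F\colon \derb(\Lambda)\to\derb(\Gamma)$. Compact objects, or equivalently perfect complexes, are characterized intrinsically. For finite dimensional algebras one uses the following characterization: $X$ is perfect if and only if for every $Y$ we have $\Hom(X,Y[n])=0$ for all $n\gg0$. Hence $F$ restricts to an equivalence $\homob(\projcat\Lambda)\to\homob(\projcat\Gamma)$. Put $T:=F(\Lambda)$. Then $\Hom(T,T[n])\cong\Hom(\Lambda,\Lambda[n])=0$ for $n\neq0$, and $\End(T)\cong\Lambda$. Moreover, $\Lambda$ generates $\homob(\projcat\Lambda)$ as a triangulated category, so $\addcat(T)$ generates $\homob(\projcat\Gamma)$.

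The middle implication is the substantial one: constructing $C$ from a tilting complex $T$ with $\End(T)\cong\Lambda$. I would follow Keller's dg approach.
\begin{itemize}
\item Consider the dg endomorphism algebra $E=\mathcal{E}nd_\Gamma(T)$. The vanishing $\Hom(T,T[n])=0$ for $n\neq0$ says $H^*(E)=H^0(E)\cong\Lambda$.
\item Form the truncation $\tau_{\le0}E$. The zigzag of quasi-isomorphisms $E\leftarrow\tau_{\le0}E\to H^0(E)=\Lambda$ gives a quasi-isomorphism of dg algebras between $E$ and $\Lambda$.
\item $T$ is a dg $(E,\Gamma)$-bimodule. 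Transport it along the zigzag, via restriction and derived extension of scalars, to obtain a complex $C$ of $\Lambda^{\op}\otimes_k\Gamma$-modules whose restriction to $\Gamma$ is $T$.
\item ${-}\otimes^{\mathbb L}_\Lambda C$ sends $\Lambda$ to $T$ and induces $\End(\Lambda)\cong\End(T)$ and $\Hom(\Lambda,\Lambda[n])\cong\Hom(T,T[n])$. By dévissage over the thick subcategory generated by $\Lambda$, it is fully faithful on $\homob(\projcat\Lambda)$.
\item Its essential image contains $\addcat(T)$, which generates $\homob(\projcat\Gamma)$. So it is an equivalence on perfect complexes.
\item Extend this to $\derb$ using that both functors commute with coproducts on unbounded derived categories and preserve boundedness of cohomology. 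Alternatively, check $C^*\otimes^{\mathbb L}_\Lambda C\cong\Gamma$ and $C\otimes^{\mathbb L}_\Gamma C^*\cong\Lambda$ directly.
\end{itemize}
One can also avoid dg algebras as in Rickard's original argument. That route passes through $K^-(\projcat)$: it builds the equivalence first and then realizes it by a bimodule complex using $\Lambda\otimes_k\Lambda^{\op}$-resolutions.

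I expect the main obstacle to be producing the bimodule structure. The naive action of $\End(T)$ on $T$ is only an action up to homotopy, so $T$ is not a complex of bimodules. Rigidifying this action is exactly what the truncation zigzag accomplishes. I would invoke Keller's theorem here rather than reprove the dg formalities.
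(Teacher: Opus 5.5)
Your proposal is correct, and it is essentially the argument of the sources the paper cites in place of a proof. You use Rickard's intrinsic characterization of $\homob(\projcat\Lambda)$ inside $\derb(\Lambda)$ to pass from a derived equivalence to a tilting complex, and Keller's zigzag $E\leftarrow\tau_{\le0}E\to H^0(E)\cong\Lambda$ to rigidify the $\Lambda$-action on $T$. The two steps you compress are routine. The resulting bimodule complex has bounded, finite-dimensional cohomology, so it can be replaced by a bounded complex. The equivalence on $D(\mathrm{Mod}\,\Lambda)$ restricts to $\derb$ because $X$ lies in $\derb(\modcat\Lambda)$ exactly when $\bigoplus_n\Hom(P,X[n])$ is finite-dimensional for every compact $P$, and any equivalence preserves this condition.
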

Let $T$ be a tilting complex of $\Gamma$-modules and set $\Lambda= \End_{\homob(\projcat{\Gamma})}(T)$. Then an equivalence from $\derb{(\Gamma)}$ to $\derb{(\Lambda)}$ is obtained by taking a $T$-resolution of an object in $\derb{(\Gamma)}$ and then applying the functor $\Hom_{\homob(\projcat{\Gamma})}(T, -)$. We say that this equivalence is induced by the tilting complex $T$.
\begin{proposition}[{\cite[Proposition 3.1]{Rickard1991}}]\label{prop:correspond}
  Let $T$ be a tilting complex of $\Gamma$-modules and $F:\derb{(\Gamma)}\to\derb{(\Lambda)}$ a derived equivalence induced by $T$. Then, there exists a two-sided tilting complex $C$ which restricts to $T$ in $\derb(\Gamma)$ and to $F(\Gamma)^{\ast}$ in $\derb(\Lambda^{\op})$.
\end{proposition}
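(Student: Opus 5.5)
The plan is to follow the classical argument of Rickard and Keller: first realize $T$ as a one-sided object carrying a left $\Lambda$-action, then lift it to a genuine bimodule complex. Put $\Lambda=\End_{\homob(\projcat\Gamma)}(T)$, the algebra for which $T$ induces the equivalence $F$.

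The obstacle is that $\Lambda$ acts on $T$ only up to homotopy, not strictly. The first step is therefore to replace $T$ by a model with a strict left action. I will use Keller's dg approach. Let $\mathcal E=\End^\bullet_\Gamma(T)$ be the dg endomorphism algebra of $T$, with $H^0(\mathcal E)\cong\Lambda$. The tilting condition $\Hom(T,T[n])=0$ for $n\neq 0$ gives $H^n(\mathcal E)=0$ for $n\neq0$. Let $\tau_{\le 0}\mathcal E$ be the truncated dg subalgebra; it is a subalgebra because the differential and product respect degree. The two quasi-isomorphisms
\[
\mathcal E\leftarrow\tau_{\le0}\mathcal E\to H^0(\mathcal E)=\Lambda
\]
are maps of dg algebras. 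Now $T$ is a dg $(\mathcal E,\Gamma)$-bimodule, so by restriction it is a dg $(\tau_{\le0}\mathcal E,\Gamma)$-bimodule. Set
\[
C:=\Lambda\otimes^{\mathbb L}_{\tau_{\le0}\mathcal E}T ,
\]
computed with a cofibrant (semi-free) resolution. This is an object of the derived category of $\Lambda^{\op}\otimes_k\Gamma$-modules, and it is an honest complex of bimodules because $\Lambda$ sits in degree $0$.

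Next I will show that $C$ is a two-sided tilting complex with the stated restrictions. Restriction and extension of scalars along a quasi-isomorphism of dg algebras are inverse equivalences of derived categories. Hence $C$ restricted to $\Gamma$ is isomorphic to $T$ in $\derb(\Gamma)$, since forgetting the left action commutes with this base change up to quasi-isomorphism. Consider the functor $-\otimes^{\mathbb L}_\Lambda C:\derb(\Lambda)\to\derb(\Gamma)$. It sends $\Lambda$ to $T$ and induces $\End(\Lambda)=\Lambda\cong\End(T)$. It also gives $\Hom(\Lambda,\Lambda[n])\cong\Hom(T,T[n])$, both zero for $n\neq0$. A triangulated functor that is fully faithful on a generator extends, by d\'evissage, to a fully faithful functor on $\homob(\projcat\Lambda)$. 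Since $\addcat(T)$ generates $\homob(\projcat\Gamma)$, the functor is essentially surjective onto $\homob(\projcat\Gamma)$. The equivalence then extends to $\derb$, because both algebras have finite dimension and $T$ is a compact generator; alternatively one passes to unbounded derived categories and uses compact objects.

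Finally, I will identify the restriction of $C$ to $\Lambda^{\op}$. Let $G=-\otimes^{\mathbb L}_\Lambda C$; by construction $G$ sends $\Lambda$ to $T$, so its quasi-inverse satisfies $G^{-1}\cong F$ on objects. As a left $\Lambda$-module, $C\cong\mathbf R\Hom_\Gamma(\Gamma,C)$. Through the equivalence this is $\Hom_{\derb(\Lambda)}(G^{-1}\Gamma,\Lambda)$ in each degree, that is $\mathbf R\Hom_\Lambda(F(\Gamma),\Lambda)=F(\Gamma)^\ast$, with the left module structure coming from $\Lambda$ acting on itself. The main obstacle is the first step, constructing the strict bimodule structure, and in particular checking that the truncation is multiplicative and that the derived tensor is well defined. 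The remaining steps are formal consequences of the tilting conditions.
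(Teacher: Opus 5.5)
The paper does not prove this statement; it cites \cite{Rickard1991}. Your first three steps follow Keller's dg route \cite{Keller1993} and are essentially sound. They produce a two-sided tilting complex $C$ with $C\cong T$ in $\derb(\Gamma)$.

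The gap is in the last step, which is exactly where the statement has content. From ``$G$ sends $\Lambda$ to $T$'' you infer that $G^{-1}\cong F$ on objects, and this does not follow. Take any automorphism $\sigma$ of $\Lambda$ and put $C'={}_{\sigma}\Lambda_{1}\otimes_\Lambda C$. Then $C'$ is again a two-sided tilting complex with $C'\cong T$ in $\derb(\Gamma)$, so its functor also sends $\Lambda$ to $T$. But the restriction of $C'$ to $\Lambda^{\op}$ is the $\sigma$-twist of the restriction of $C$. This is in general not isomorphic to $F(\Gamma)^{\ast}$, for instance when $\sigma$ permutes primitive idempotents and $F(\Gamma)$ is not stable under that permutation. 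So knowing one side fixes the other only up to such a twist; this is the same ambiguity that produces $\alpha$ in \cref{thm:dualtwosided}. Separately, your comparison ``in each degree'' through $\Hom_{\derb(\Lambda)}(G^{-1}\Gamma,\Lambda[n])$ only matches cohomology modules. That does not give an isomorphism in $\derb(\Lambda^{\op})$.

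Three things are needed to close the gap.

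(i) You must show that the left $\Lambda$-action on $C$ realizes the \emph{canonical} identification $\Lambda=H^0(\mathcal E)=\End_{\homob(\projcat\Gamma)}(T)$, not just some isomorphism. This does hold in your construction. The map $pT\to C$, $t\mapsto 1\otimes t$, intertwines each $z\in Z^0(\mathcal E)$ with left multiplication by its class. Hence $G$ restricted to $\addcat(\Lambda)$ is a quasi-inverse of $\Hom_{\homob(\projcat\Gamma)}(T,-)$ on $\addcat(T)$, on objects and on morphisms.

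(ii) Let $F(\Gamma)=\Hom(T,T^{(\bullet)})$, coming from a $T$-resolution $T^{(\bullet)}$ of $\Gamma$. Apply $G$ to its brutal truncations and compare with the iterated cones presenting $\Gamma$. Since $\Hom(T,T[m])=0$ for $m<0$, each connecting morphism $T^{(a)}[-a]\to(\text{later terms})[1]$ is determined by its component $T^{(a)}\to T^{(a+1)}$. Induct on the length, keeping the isomorphisms compatible with projection onto the lowest term. This gives $G(F(\Gamma))\cong\Gamma$.

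(iii) Put $Z=F(\Gamma)$. Consider the map $\Hom_\Lambda(Z,\Lambda)\to\mathbf R\Hom_\Gamma(Z\otimes_\Lambda C,C)\cong\mathbf R\Hom_\Gamma(\Gamma,C)=C$ induced by $G$. It is left $\Lambda$-linear, and it is a quasi-isomorphism because $G$ is fully faithful. This gives $C\cong F(\Gamma)^{\ast}$ in $\derb(\Lambda^{\op})$ at the level of complexes, not merely of cohomology.
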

We say that a two-sided tilting complex $C$ corresponds to $T$ if it satisfies the condition in \cref{prop:correspond}.

\begin{proposition}[{\cite[Lemma 1.3~(1)]{okuyama1998derived}}]\label{prop:imageofsimples}
  Let $T$ be a tilting complex of $\Gamma$-modules. Let $F:\derb{(\Gamma)}\to\derb{(\Lambda)}$ be a derived equivalence induced by $T$. We fix $X\in \derb{(\Gamma)}$ and $\ell_0 \in \mathbb Z $. If $\Hom_{\homob{(\modcat \Gamma)}}(T, X[\ell])=0$ for all $\ell \in \mathbb Z$ with $\ell \neq \ell_0$, then
  \[H^{\ell}(F(X))\cong \begin{cases}
      \Hom_{\homob{(\modcat \Gamma)}}(T, X[\ell]) & (\text{for $\ell = \ell_0$}),        \\
      0                                           & (\text{for all $\ell \neq \ell_0$}),
    \end{cases}\]
  as $\Lambda$-modules.
\end{proposition}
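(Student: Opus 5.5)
We need to prove Proposition (Okuyama Lemma 1.3(1)). The hypothesis is that $\Hom_{\homob(\modcat\Gamma)}(T,X[\ell])=0$ for all $\ell\neq\ell_0$, and we must compute $H^\ell(F(X))$.

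The plan is to express the cohomology of $F(X)$ as Hom from $\Lambda$ in $\derb(\Lambda)$ and then transport back along $F^{-1}$. For any complex $Y\in\derb(\Lambda)$ there is a natural isomorphism of right $\Lambda$-modules
\[
H^\ell(Y)\cong\Hom_{\derb(\Lambda)}(\Lambda,Y[\ell]),
\]
where the right $\Lambda$-action on the Hom space comes from left multiplication in $\End_{\derb(\Lambda)}(\Lambda)\cong\Lambda$. Apply this with $Y=F(X)$. Since $F$ is induced by $T$, we have $F(T)\cong\Lambda$; indeed $T$ is its own $T$-resolution, and $\Hom(T,T)=\Lambda$ is the regular module. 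Because $F$ is a triangulated equivalence, it commutes with shifts, so
\[
H^\ell(F(X))\cong\Hom_{\derb(\Lambda)}(F(T),F(X)[\ell])\cong\Hom_{\derb(\Gamma)}(T,X[\ell]).
\]
Moreover $T$ is a bounded complex of projectives, so $\Hom_{\derb(\Gamma)}(T,X[\ell])=\Hom_{\homob(\modcat\Gamma)}(T,X[\ell])$ for $X$ a bounded complex (taking $X$ represented by a bounded complex). Together with the vanishing hypothesis, this gives both cases of the statement.

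The remaining point, and the main obstacle, is $\Lambda$-linearity. The $\Lambda$-module structure on $\Hom(T,X[\ell])$ is given by precomposition with $\End(T)=\Lambda$. We need the isomorphism $F(T)\cong\Lambda$ to intertwine $\End(T)\to\End(F(T))$ with the identification $\End(\Lambda)=\Lambda$ by left multiplication. This holds by the construction of $F$ as $\Hom(T,-)$ on $T$-resolutions: an endomorphism $\lambda$ of $T$ is sent to $\Hom(T,\lambda)$, which is left multiplication by $\lambda$ on $\Lambda=\Hom(T,T)$. Naturality of the chain of isomorphisms in the first variable then gives $\Lambda$-linearity.

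Finally, we note that the vanishing hypothesis is used only to identify $H^\ell(F(X))=0$ for $\ell\neq\ell_0$; the formula for $\ell=\ell_0$ holds in general.
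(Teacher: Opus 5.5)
Your argument is correct. The paper gives no proof of its own (it only cites Okuyama), but it uses the same identification $H^\ell(F(X))\cong\Hom_{\homob(\modcat\Gamma)}(T,X[\ell])$, via $\mathbb R\Hom_\Gamma(T,-)$ and \cref{prop:imageofcomps}, in the proof of \cref{prop:precharacterization}. So your route through $H^\ell(Y)\cong\Hom_{\derb(\Lambda)}(\Lambda,Y[\ell])$, $F(T)\cong\Lambda$ and full faithfulness is essentially the same approach, and your explicit check that $F$ sends $\lambda\in\End(T)$ to left multiplication by $\lambda$ properly settles the $\Lambda$-linearity that the $\mathbb R\Hom$ formulation leaves implicit.
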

For $X, Y \in \derb{(\Gamma)}$, we write $\mathbb{R}\Hom_{\Gamma}(X,Y)$ for the derived Hom complex.
\begin{proposition}[{\cite[Lemma 3.7.10]{Zimmermann2014}, \cite[p.~399]{Weibel1994}}]\label{prop:imageofcomps}
  Let $X, Y\in \derb{(\Gamma)}$. Then for any integer $\ell$, we have
  \[H^{\ell}(\mathbb R\Hom_{\Gamma} (X,Y)) \cong \Hom_{\homob{(\modcat \Gamma)}}(X, Y[\ell]). \]

\end{proposition}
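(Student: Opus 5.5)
The plan is to compute $\mathbb R\Hom_{\Gamma}(X,Y)$ using a projective resolution of the first argument, and then read off its cohomology directly from the definition of the total Hom complex. One caveat first: the right-hand side $\Hom_{\homob(\modcat\Gamma)}(X,Y[\ell])$ is only meaningful as a derived-category Hom when $X$ is a complex of projectives. So I will prove the statement in the form
\[
H^\ell(\mathbb R\Hom_\Gamma(X,Y))\cong \Hom_{\derb(\Gamma)}(X,Y[\ell]),
\]
and then note that this equals the homotopy Hom whenever $X$ is a bounded complex of projectives. This is the only case used later, with $X=T$ a tilting complex.

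\emph{Step 1 (resolution).} Choose a bounded-above complex $P$ of finitely generated projective $\Gamma$-modules together with a quasi-isomorphism $P\to X$; such a $P$ exists because $\Gamma$ is finite dimensional. By definition $\mathbb R\Hom_\Gamma(X,Y)$ is the total Hom complex $\Hom^\bullet_\Gamma(P,Y)$, whose degree-$\ell$ term is
\[
\prod_{n}\Hom_\Gamma(P^n,Y^{n+\ell}),
\]
with differential $f\mapsto d_Y\circ f-(-1)^\ell f\circ d_P$. If $X$ is already a bounded complex of projectives, take $P=X$.

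\emph{Step 2 (cohomology of the Hom complex).} A degree-$\ell$ element is a $0$-cocycle exactly when it commutes with the differentials up to the sign convention for $Y[\ell]$, i.e.\ when it is a chain map $P\to Y[\ell]$. A cocycle is a coboundary exactly when it has the form $d_Y h\pm h d_P$, i.e.\ when the chain map is null-homotopic. Therefore
\[
H^\ell(\Hom^\bullet_\Gamma(P,Y))=\Hom_{\homob(\modcat\Gamma)}(P,Y[\ell]).
\]
Here the homotopy category should be taken to allow bounded-above complexes when $P$ is unbounded. This step is a routine sign check.

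\emph{Step 3 (passing to the derived category).} A bounded-above complex of projectives is K-projective. So for any complex $Z$, homotopy classes of maps $P\to Z$ coincide with morphisms $P\to Z$ in the derived category: any acyclic complex receives only null-homotopic maps from $P$, and one applies the standard roof argument. Since $P\cong X$ in $\derb(\Gamma)$, this gives
\[
\Hom_{\homob}(P,Y[\ell])\cong\Hom_{\derb(\Gamma)}(X,Y[\ell]).
\]
When $X$ itself is a bounded complex of projectives, we may take $P=X$ and obtain $\Hom_{\homob(\modcat\Gamma)}(X,Y[\ell])$ as stated.

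The main obstacle is Step 3. It requires the K-projectivity lemma for bounded-above projective complexes mapping into a bounded complex $Y$. I would prove it by induction on the brutal truncations of $P$, using that for fixed $\ell$ only finitely many components $P^n$ interact with the bounded complex $Y$. This finiteness also ensures that the products in Step 1 are finite.
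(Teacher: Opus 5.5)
Your argument is correct and is the standard proof behind the results the paper cites; the paper gives no argument of its own. In Step 3, K-projectivity is easiest to get by building the null-homotopy degree by degree downward from the top of $P$, lifting through the acyclic target using projectivity of each $P^n$; this needs no boundedness of the target and no truncation argument. Your caveat is also justified. For arbitrary $X$ the right-hand side must be read as $\Hom_{\derb(\Gamma)}(X,Y[\ell])$: for $\Gamma=k[x]/(x^2)$, $X=Y=k$ and $\ell=1$, the left side is $\mathrm{Ext}^1_\Gamma(k,k)\neq 0$, while the homotopy-category Hom between stalk complexes in different degrees vanishes. The paper only applies the statement with $X=T$ a bounded complex of projectives, and there your Steps 1--2 with $P=T$ already give it.
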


\subsection{Images of simple modules}
Throughout this subsection,
let $T=(T^{\ell}, d^{\ell})_{\ell \in \mathbb Z}=\bigoplus_{i=1}^n T_i$ be a basic tilting complex of $\Gamma$-modules, where each complex $T_i=(T_i^{\ell}, d_i^{\ell})_{\ell\in \mathbb Z}$ is indecomposable and not homotopy equivalent to $0$. Thus, $d_i^{\ell}$ is a radical homomorphism. We denote a triangulated  equivalence from $\derb(\Gamma)$ to $\derb(\Lambda)$ induced by the tilting complex $T$ by $F$.
We denote the pair-wise non-isomorphic simple $\Gamma$-modules by $S_1, \dots, S_n$.

\begin{proposition}\label{prop:precharacterization}
  The following are equivalent for each $i \in \{1, \dots, n\}$.
  \begin{enumerate}
    \item\label{eachtrivcap} $P(S_i)$ appears as a direct summand of a unique graded component of $T$.

    \item\label{eachmodshiftB} The complex $F(S_i)$ is isomorphic to a shift of a $\Lambda$-module.
  \end{enumerate}
  Moreover, if $P(S_i)$ is a summand of the unique graded component $T^{n_i}$, then $F(S_i)$ is isomorphic to $\Hom_{K^b(\modcat \Gamma)}(T,S_i[-n_i])[n_i]$ and vice versa.
\end{proposition}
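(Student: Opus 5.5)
The plan is to compute the cohomology of $F(S_i)$ directly by combining \cref{prop:imageofsimples} and \cref{prop:imageofcomps}. Since $T$ is a bounded complex of projectives, $\Hom_{\homob(\modcat\Gamma)}(T,S_i[\ell])$ is the $\ell$-th cohomology of the Hom complex $\Hom_\Gamma(T,S_i)$. Its degree $\ell$ term is $\Hom_\Gamma(T^{-\ell},S_i)$, with differentials induced by the $d^{m}$. Each $d^m$ is a radical homomorphism, because $T$ is a direct sum of indecomposable complexes with radical differentials. Hence any map $T^{m+1}\to S_i$ composed with $d^m$ factors through $\mathrm{rad}\,T^{m+1}$ and therefore vanishes. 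So all differentials of $\Hom_\Gamma(T,S_i)$ are zero, and we get
\[\Hom_{\homob(\modcat\Gamma)}(T,S_i[\ell])\cong \Hom_\Gamma(T^{-\ell},S_i).\]
This is nonzero exactly when $P(S_i)$ is a direct summand of $T^{-\ell}$. This is the key computation.

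For (1)$\Rightarrow$(2): assume $P(S_i)$ occurs only in $T^{n_i}$. Then $\Hom(T,S_i[\ell])=0$ for $\ell\ne -n_i$. Applying \cref{prop:imageofsimples} with $\ell_0=-n_i$, the complex $F(S_i)$ has cohomology only in degree $-n_i$, equal to $M=\Hom_{\homob(\modcat\Gamma)}(T,S_i[-n_i])$. A complex over $\Lambda$ with cohomology concentrated in one degree is isomorphic in $\derb(\Lambda)$ to that cohomology placed in that degree, via the truncation triangles. Hence $F(S_i)\cong M[n_i]$, which is the claimed formula.

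For (2)$\Rightarrow$(1): assume $F(S_i)\cong N[m]$ for some $\Lambda$-module $N$. I need to recover $\Hom(T,S_i[\ell])$ from $F(S_i)$. Since $F(T)\cong\Lambda$ (as $F$ is induced by $T$), full faithfulness gives
\[\Hom_{\derb(\Gamma)}(T,S_i[\ell])\cong\Hom_{\derb(\Lambda)}(\Lambda,F(S_i)[\ell])\cong H^{\ell}(F(S_i)).\]
This equals $H^{\ell+m}$ of the stalk complex $N$, so it vanishes unless $\ell=-m$. Also $\Hom_{\derb}$ agrees with $\Hom_{\homob}$ here because $T$ is a bounded complex of projectives. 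By the key computation, $P(S_i)$ is then a summand of $T^{m}$ only. It does appear there: $S_i\ne0$, so $F(S_i)\ne0$, so $N\ne0$. This proves (1) with $n_i=m$, and $N\cong\Hom(T,S_i[-n_i])$. That gives the ``vice versa'' part of the final assertion, which also follows from uniqueness of the degree in which the cohomology of $F(S_i)$ is nonzero.

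I expect the main obstacle to be the key computation, namely justifying that the differentials of $T$ are radical so that the Hom complex into a simple module has zero differential. The step needing most care is the identification $H^\ell(F(S_i))\cong\Hom(T,S_i[\ell])$ through $F(T)\cong\Lambda$; it should follow from the construction of $F$, or equivalently from \cref{prop:imageofcomps}.
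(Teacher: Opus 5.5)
Your proposal is correct and follows essentially the same route as the paper: (1)$\Rightarrow$(2) via \cref{prop:imageofsimples} with $\ell_0=-n_i$, and (2)$\Rightarrow$(1) via $H^{\ell}(F(S_i))\cong\Hom_{\homob(\modcat\Gamma)}(T,S_i[\ell])\cong\Hom_\Gamma(T^{-\ell},S_i)$. The only differences are that you spell out two points the paper leaves implicit. First, the radical differentials give the Hom complex into $S_i$ zero differential, which is what the paper compresses into ``since $S_i$ is simple''. Second, $F(S_i)\neq 0$ forces the module to be nonzero, so $P(S_i)$ actually occurs in $T^{n_i}$.
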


\begin{proof}
  (\ref{eachtrivcap} $\Rightarrow$ \ref{eachmodshiftB}).
  We have $\Hom_\Gamma(T^{\ell}, S_i)=0$ for all $\ell\neq n_i$ by the assumption. Thus we have $\Hom_{\homob(\modcat \Gamma)}(T, S_i[-\ell])=0$ for all $\ell\neq n_i$.
  By \cref{prop:imageofsimples}, $F(S_i)\cong W_i[n_i]$, where we set a $\Lambda$-module $W_i=\Hom_{\homob(\modcat \Gamma)}(T, S_i[-n_i])$.

  (\ref{eachmodshiftB} $\Rightarrow$ \ref{eachtrivcap}).
  Since $F(S_i)\cong W_i[n_i]$ by the assumption, we have
  \[H^{\ell}(F(S_i)) \cong \begin{cases}
      W_i & (\text{for $\ell=-n_i$}),     \\
      0   & (\text{for $\ell\neq -n_i$}).
    \end{cases}\]
  By \cref{prop:imageofcomps},
  \[H^{\ell}(F(S_i))\cong H^{\ell}(\mathbb R\Hom_{\Gamma} (T,S_i)) \cong \Hom_{\homob{(\modcat \Gamma)}}(T, S_i[\ell]). \]
  Since $S_i$ is a simple module, we have \[\Hom_{\homob{(\modcat \Gamma)}}(T, S_i[\ell]) \cong \Hom_{\Gamma}(T^{-\ell}, S_i). \]
  Thus \[\Hom_\Gamma(T^{\ell}, S_i)\cong \begin{cases}
      W_i & (\text{for $\ell = n_i$}),    \\
      0   & (\text{for $\ell \neq n_i$}).
    \end{cases}\]
  Therefore,
  $P(S_i)$ is a summand of $T^{n_i}$ but not a summand of $T^{\ell}$ for $\ell \neq n_i$.
\end{proof}

\begin{proposition}\label{prop:characterization}
  The following are equivalent:
  \begin{enumerate}
    \item\label{trivcap} $\addcat{T^{\ell}}\cap \addcat{T^{\ell'}} = 0$ if $\ell \neq \ell'$.
    \item \label{modshiftB} For each $i$, there exist an indecomposable $\Lambda$-module $W_i$ and an integer $n_i$ such that $F(S_i) = W_i[n_i]$.
  \end{enumerate}
\end{proposition}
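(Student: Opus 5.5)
The plan is to reduce the statement to \cref{prop:precharacterization}, applied to each $i\in\{1,\dots,n\}$ separately. The only new ingredient is that $W_i$ is \emph{indecomposable}, and this comes for free because $F$ is an equivalence. Since $\Gamma$ is basic up to Morita equivalence and every indecomposable projective $\Gamma$-module is $P(S_i)$ for some $i$, condition (\ref{trivcap}) says exactly this: for each $i$, the module $P(S_i)$ is a direct summand of at most one graded component $T^{\ell}$. We must also show it is a summand of at least one graded component. Otherwise $\Hom_\Gamma(T^\ell,S_i)=0$ for all $\ell$, hence $\Hom_{\homob(\modcat\Gamma)}(T,S_i[\ell])=0$ for all $\ell$. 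Because $\addcat(T)$ generates $\homob(\projcat\Gamma)$, this forces $\Hom(\Gamma,S_i[\ell])=0$ for all $\ell$, contradicting $S_i\neq 0$. Equivalently, $F(S_i)\cong \mathbb R\Hom_\Gamma(T,S_i)$ would be zero although $S_i\neq0$. So (\ref{trivcap}) holds if and only if condition (\ref{eachtrivcap}) of \cref{prop:precharacterization} holds for every $i$.

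For (\ref{trivcap}) $\Rightarrow$ (\ref{modshiftB}), I apply \cref{prop:precharacterization} to each $i$. This gives $F(S_i)\cong W_i[n_i]$ with $W_i=\Hom_{\homob(\modcat\Gamma)}(T,S_i[-n_i])$ a $\Lambda$-module. Indecomposability of $W_i$ follows from $\End_{\derb(\Lambda)}(W_i[n_i])\cong\End_{\derb(\Gamma)}(S_i)\cong k$, which is local; hence $W_i\neq0$ is indecomposable, since a nontrivial decomposition would produce nontrivial idempotents. Strictly, this needs $\End_{\derb(\Lambda)}(W_i)=\End_\Lambda(W_i)$ for a module, which is standard.

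For (\ref{modshiftB}) $\Rightarrow$ (\ref{trivcap}), the hypothesis is exactly condition (\ref{eachmodshiftB}) of \cref{prop:precharacterization} for every $i$, with the indecomposability simply discarded. That proposition therefore gives, for each $i$, that $P(S_i)$ is a summand of a unique graded component. Now suppose $\addcat T^\ell\cap\addcat T^{\ell'}\neq 0$ for some $\ell\neq\ell'$. By Krull--Schmidt in $\modcat\Gamma$, some indecomposable projective $P(S_i)$ would then lie in both, contradicting uniqueness. Hence (\ref{trivcap}) holds.

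The main obstacle is the bookkeeping in the first paragraph: checking that ``unique'' in \cref{prop:precharacterization} includes existence, and that every indecomposable projective does occur in some $T^\ell$, which uses the generating property of $T$. The rest is a direct translation, plus the short endomorphism-ring argument for indecomposability.
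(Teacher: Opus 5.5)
Your proof is correct and takes essentially the paper's route: both reduce the statement to \cref{prop:precharacterization} applied to each $i$, together with the fact that every indecomposable summand of $T^\ell$ is isomorphic to some $P(S_i)$. You also make explicit three points the paper leaves as ``easily checked'': that each $P(S_i)$ occurs in some $T^\ell$ (via the generating property of $T$), that $W_i$ is indecomposable because $\End_\Lambda(W_i)\cong\End_{\derb(\Gamma)}(S_i)\cong k$, and the Krull--Schmidt step.
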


\begin{proof}
  By \cref{prop:precharacterization}, condition~\ref{modshiftB} is equivalent to the following: for each $i$, there exists an integer $n_i$ such that
  \[
    P(S_i) \in \addcat{T^{\ell}} \quad \text{if and only if} \quad \ell = n_i.
  \]
  Thus, if $P(S_i)\in \addcat{T^{\ell}}\cap \addcat{T^{\ell'}}$, then $\ell =\ell' = n_i$. By using this fact,  we can easily check that the two conditions are equivalent.
\end{proof}

\subsection{Stable equivalences of Morita type}\label{s:stable}
In this subsection, we recall the basic results on stable equivalences for symmetric algebras, which are weaker equivalences than derived equivalences.
Let $\Gamma$ and $\Lambda$ be two finite dimensional indecomposable symmetric $k$-algebras. For $\Gamma$-modules $U$ and $V$, we denote the $k$-linear space of all homomorphisms from $U$ to $V$ which factor through projective modules by $\prHom(U,V)$. The stable category of $\Gamma$-modules denoted by $\stmodcat{\Gamma}$ is defined as follows:
\begin{itemize}
  \item The objects are the same  as those of $\modcat \Gamma$.
  \item For $\Gamma$-modules $U$ and $V$, the set of morphisms from $U$ to $V$ is $\Hom(U,V)/\prHom(U,V)$. We denote this by $\stHom(U,V)$.
\end{itemize} In addition, the category $\stmodcat{\Gamma}$ is a triangulated category with the shift functor $\syzygy[-1]$.

\begin{definition}[{\cite[\S5]{Broue1994}}]
  We say that $\Gamma$ and $\Lambda$ are \emph{stably equivalent of Morita type} if there exist a $(\Lambda, \Gamma)$-bimodule $M$ and a $(\Gamma, \Lambda)$-bimodule $N$ satisfying the following conditions.
  \begin{itemize}
    \item The bimodules $M$ and $N$ are projective as left modules and right modules.
    \item $N\otimes_\Lambda M\cong \Gamma\oplus P$ as $(\Gamma,\Gamma)$-bimodules for some projective $(\Gamma,\Gamma)$-bimodule $P$.
    \item $M\otimes_\Gamma N\cong \Lambda\oplus Q$ as $(\Lambda,\Lambda)$-bimodules for some projective $(\Lambda,\Lambda)$-bimodule $Q$.
  \end{itemize}
  Then, we say that $M$ \emph{induces a stable equivalence of Morita type}.
\end{definition}

We remark that the above $(\Lambda, \Gamma)$-bimodule $M$ induces a functor ${-}\otimes_\Lambda M: \modcat{\Lambda} \to \modcat{\Gamma}$, which induces a stable equivalence $\stmodcat{\Lambda} \to \stmodcat{\Gamma}$.

Let $\mathcal S'$ denote a complete set of representatives of isomorphism classes of simple $\Lambda$-modules.

\begin{proposition}[{\cite[Lemma 2]{Rouquier1995}}]\label{prop:projcoverRouquier}
  Let $M$ be a $\Lambda^{\op}\otimes_k \Gamma$-module, which is projective as a $\Gamma$-module and as a
  $\Lambda^{\op}$-module. A projective cover of $M$ is isomorphic to \[\bigoplus_{V\in \mathcal S'} P(V)^{\ast} \otimes_k P(V\otimes_\Lambda M). \]
\end{proposition}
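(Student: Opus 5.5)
The aim is to prove Rouquier's lemma: for a $\Lambda^{\op}\otimes_k\Gamma$-module $M$ that is projective on both sides, the projective cover is $\bigoplus_{V\in\mathcal S'} P(V)^{\ast}\otimes_k P(V\otimes_\Lambda M)$. The plan is to identify the top of $M$ as a bimodule and then lift it to a projective cover.

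\textbf{Setup.} Since $k$ is algebraically closed, $\Lambda^{\op}\otimes_k\Gamma$ is again finite dimensional. Its simple modules are the $V'\otimes_k S$, where $V'$ runs over simple $\Lambda^{\op}$-modules and $S$ over simple $\Gamma$-modules. For symmetric $\Lambda$, the simple $\Lambda^{\op}$-modules are the $V^{\ast}$ with $V\in\mathcal S'$. Moreover $V^{\ast}\otimes_k S$ has projective cover $P(V)^{\ast}\otimes_k P(S)$. Here $P(V)^{\ast}$ is an indecomposable projective $\Lambda^{\op}$-module with top $V^{\ast}$; for symmetric algebras $P(V)^{\ast}\cong \Hom_k(P(V),k)$ has socle-dual structure, and its top is $V^{\ast}$.

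\textbf{Multiplicity of simple tops.} It suffices to show that, for each $V$ and $S$, the multiplicity of $V^{\ast}\otimes_k S$ in the top of $M$ equals the multiplicity of $S$ in the top of $V\otimes_\Lambda M$. That multiplicity is $\dim\Hom_{\Lambda^{\op}\otimes\Gamma}(M, V^{\ast}\otimes_k S)$, since the endomorphism ring of each simple module is $k$. We compute it via the adjunction
\[
\Hom_{\Lambda^{\op}\otimes\Gamma}(M, \Hom_k(V,k)\otimes_k S)\;\cong\;\Hom_{\Lambda^{\op}\otimes\Gamma}(M,\Hom_k(V,S))\;\cong\;\Hom_\Gamma(V\otimes_\Lambda M, S).
\]
This uses the identification $V^{\ast}=\Hom_\Lambda(V,\Lambda)\cong\Hom_k(V,k)$, valid because $\Lambda$ is symmetric, and $\Hom_k(V,k)\otimes_k S\cong\Hom_k(V,S)$, valid because $V$ is finite dimensional. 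The right-hand side has dimension equal to the multiplicity of $S$ in the top of $V\otimes_\Lambda M$. Hence
\[
\operatorname{top}(M)\cong\bigoplus_{V}V^{\ast}\otimes_k\operatorname{top}(V\otimes_\Lambda M),
\]
and taking projective covers summand by summand gives the claimed formula. The hypothesis that $M$ is projective on each side is not essential for this count; it only guarantees that $V\otimes_\Lambda M$ is a well-behaved $\Gamma$-module. I would note this in passing.

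\textbf{Main obstacle.} The delicate step is bookkeeping the left/right actions in the adjunction and in the identification $V^{\ast}\cong\Hom_k(V,k)$ as $\Lambda^{\op}$-modules. This uses the symmetrizing form: $f\mapsto t\circ f$, where $t$ is the symmetrizing form. I would check directly that this map is $\Lambda$-linear with the action $(af)(u)=f(ua)$ defined in the Notation section. I would also verify that the simple bimodules are exactly the $V^{\ast}\otimes_k S$ up to isomorphism, using that $\End(V)=k$ over an algebraically closed field. Once these are in place, the rest of the argument is formal.
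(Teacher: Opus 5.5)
Your argument is correct. The paper gives no proof of this statement and simply quotes Rouquier \cite[Lemma 2]{Rouquier1995}, so there is no internal argument to compare with. Your route is the standard proof: compute the top of $M$ through
\[
\Hom_{\Lambda^{\op}\otimes_k\Gamma}(M, V^{\ast}\otimes_k S)\cong \Hom_{\Lambda^{\op}\otimes_k\Gamma}(M,\Hom_k(V,S))\cong\Hom_\Gamma(V\otimes_\Lambda M,S),
\]
then lift to projective covers.

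To make it complete, record one fact: $P(V)^{\ast}\otimes_k P(S)$ is an indecomposable projective $\Lambda^{\op}\otimes_k\Gamma$-module with top $V^{\ast}\otimes_k S$. This holds because the radical of $\Lambda^{\op}\otimes_k\Gamma$ is $J(\Lambda^{\op})\otimes_k\Gamma+\Lambda^{\op}\otimes_k J(\Gamma)$. That fact, together with $\End(V^{\ast}\otimes_k S)=k$, is where algebraic closedness of $k$ enters.

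Your remark that two-sided projectivity of $M$ is never used is also right. The formula holds for every finitely generated bimodule. The hypothesis in the paper only reflects where the result is applied, namely to the Heller translates $\syzygy[t-1]M$ in \cref{prop:projresol}.

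One caution about the step you flagged. On $\Hom_\Lambda(V,\Lambda)$ the left action has to be $(af)(u)=a\,f(u)$, because $u\mapsto f(ua)$ is not $\Lambda$-linear in general. The formula $(af)(u)=f(ua)$ from the Notation section is really the action on $\Hom_k(V,k)$. With these conventions, $f\mapsto t\circ f$ is $\Lambda$-linear because
\[
t(a\,f(u))=t(f(u)\,a)=t(f(ua)).
\]
This is exactly where the symmetry $t(xy)=t(yx)$ of the form is used. So do the check with these actions rather than literally with the Notation-section formula.
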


Let $M$ be an indecomposable $\Lambda^{\op}\otimes_k \Gamma$-module inducing a stable equivalence of Morita type between $\Lambda$ and $\Gamma$.
Let $P=(P^t, d_M^t)_{t\in \Z}$ be a projective resolution of the $\Lambda^{\op}\otimes_k \Gamma$-module $M$. Since $\syzygy[\ell] M$ gives a stable equivalence of Morita type for $\ell \ge 1$ (see \cite[Comparison 2.3.5]{Rouquier2001a} and \cite[Lemma 4.2]{Kozakai_Kunugi_2018}), the definition of a minimal projective resolution and \cref{prop:projcoverRouquier} give the following proposition.
\begin{proposition}\label{prop:projresol}
  The $(\Lambda,\Gamma)$-bimodule $P^{-t}$ is isomorphic to $M$ for $t=0$, to $0$ for all $t<0$, and to \[\bigoplus_{V\in \mathcal S'} P(V)^{\ast}\otimes_k P(V\otimes_{\Lambda} \syzygy[t-1] M)\] for all $t>0$.
\end{proposition}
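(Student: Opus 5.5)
Since $P$ is a minimal projective resolution of $M$, the components are determined term by term. In degree $0$ there is nothing to prove: by definition of a projective resolution in the convention used here, the component $P^{0}$ is $M$ itself, regarded as the stalk to be resolved, and $P^{-t}=0$ for $t<0$. For $t>0$ I will argue inductively. The map $P^{-1}\to M$ is a projective cover of $M$, and for $t\ge 2$ the map $P^{-t}\to P^{-t+1}$ is a projective cover of its image. Minimality gives that this image is $\syzygy[t-1] M$, the kernel of the previous map, where $\syzygy$ is taken in $\modcat(\Lambda^{\op}\otimes_k\Gamma)$. Hence $P^{-t}\cong P(\syzygy[t-1]M)$ as bimodules for all $t\ge 1$, with the convention $\syzygy[0]M=M$.

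Next I will apply \cref{prop:projcoverRouquier} to the bimodule $\syzygy[t-1] M$. This requires checking its hypothesis, namely that $\syzygy[t-1]M$ is projective as a left $\Lambda$-module and as a right $\Gamma$-module. For $t=1$ this holds because $M$ induces a stable equivalence of Morita type. For $t\ge 2$ I use the short exact sequence $0\to\syzygy[t-1]M\to P(\syzygy[t-2]M)\to\syzygy[t-2]M\to 0$. A projective $\Lambda^{\op}\otimes_k\Gamma$-module is a summand of copies of $\Lambda\otimes_k\Gamma$, so it is projective on each side. By induction the right-hand term is one-sided projective, so the sequence splits one-sidedly, and the kernel is one-sided projective. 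Alternatively, one can simply invoke the cited fact (\cite[Comparison 2.3.5]{Rouquier2001a}, \cite[Lemma 4.2]{Kozakai_Kunugi_2018}) that $\syzygy[\ell]M$ again induces a stable equivalence of Morita type, which includes the one-sided projectivity.

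\cref{prop:projcoverRouquier} then gives
\[P^{-t}\cong P(\syzygy[t-1]M)\cong\bigoplus_{V\in\mathcal S'}P(V)^{\ast}\otimes_k P(V\otimes_\Lambda\syzygy[t-1]M),\]
which is the claimed formula. The only point needing care, and the one I expect to be the main obstacle, is the bookkeeping that minimality makes the image of each differential exactly the bimodule syzygy $\syzygy[t-1]M$ with no projective summands left over. This is standard for minimal resolutions over the finite-dimensional algebra $\Lambda^{\op}\otimes_k\Gamma$, since projective covers exist and are unique up to isomorphism. The other step to handle carefully is verifying the one-sided projectivity hypothesis at every stage, which the induction above takes care of.
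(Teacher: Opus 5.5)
Your proof is correct and follows the paper's argument: minimality identifies $P^{-t}$ with $P(\syzygy[t-1]M)$, and then \cref{prop:projcoverRouquier} is applied to $\syzygy[t-1]M$. The only difference is that you check the one-sided projectivity of $\syzygy[t-1]M$ directly, by induction on the one-sidedly split sequences, whereas the paper cites the stronger fact that $\syzygy[\ell]M$ again induces a stable equivalence of Morita type; your direct check is all that is actually needed here.
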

The following proposition is useful to consider a minimal projective resolution of the module $M$.
\begin{proposition}[{\cite[Lemma 4.3]{Kozakai_Kunugi_2018}}]\label{prop:hellermove}
  We have $V \otimes_\Lambda \syzygy[\ell] M \cong \syzygy[\ell] (V \otimes_\Lambda M )$ for any simple $\Lambda$-module $V$ and $\ell\ge 0$.
\end{proposition}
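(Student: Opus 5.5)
We prove the isomorphism $V\otimes_\Lambda \syzygy[\ell] M \cong \syzygy[\ell](V\otimes_\Lambda M)$ by induction on $\ell$. The case $\ell=0$ is trivial, so the work is the step from $\ell$ to $\ell+1$. The idea is to tensor the defining exact sequence of $\syzygy[\ell+1]M$ with $V$, and then compare the result with the defining sequence of $\syzygy(V\otimes_\Lambda\syzygy[\ell]M)$.

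First we record that each $\syzygy[\ell]M$ is projective as a left $\Lambda$-module and as a right $\Gamma$-module. This holds for $M$ by hypothesis. For the inductive step, take the projective cover sequence
\[0\to \syzygy[\ell+1]M\to P(\syzygy[\ell]M)\to \syzygy[\ell]M\to 0\]
of $\Lambda^{\op}\otimes_k\Gamma$-modules. The middle term is a projective bimodule, hence projective on each side. The sequence splits on each side because $\syzygy[\ell]M$ is one-sided projective, so the kernel is one-sided projective as well. Alternatively, we can cite \cite[Lemma 4.2]{Kozakai_Kunugi_2018}, as the paper already does. Because $\syzygy[\ell]M$ is projective as a left $\Lambda$-module, the sequence splits as left $\Lambda$-modules, and so applying $V\otimes_\Lambda -$ keeps it exact:
\[0\to V\otimes_\Lambda\syzygy[\ell+1]M\to V\otimes_\Lambda P(\syzygy[\ell]M)\to V\otimes_\Lambda\syzygy[\ell]M\to 0 .\]

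By \cref{prop:projcoverRouquier}, applied to the one-sided projective bimodule $\syzygy[\ell]M$, we have $P(\syzygy[\ell]M)\cong\bigoplus_{V'\in\mathcal S'}P(V')^{\ast}\otimes_k P(V'\otimes_\Lambda\syzygy[\ell]M)$. Using $V\otimes_\Lambda P(V')^\ast\cong \Hom_\Lambda(P(V'),V)$, which is $k$ if $V'\cong V$ and $0$ otherwise ($k$ algebraically closed), we get
\[V\otimes_\Lambda P(\syzygy[\ell]M)\cong P(V\otimes_\Lambda\syzygy[\ell]M).\]
So the middle term is a projective $\Gamma$-module isomorphic to the projective cover of the right-hand term. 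A surjection from a module isomorphic to the projective cover onto the target is itself a projective cover, since any epimorphism $P(X)\to X$ is minimal by dimension count. Hence its kernel is $\syzygy(V\otimes_\Lambda\syzygy[\ell]M)$. This gives
\[V\otimes_\Lambda\syzygy[\ell+1]M\cong\syzygy(V\otimes_\Lambda\syzygy[\ell]M)\cong\syzygy(\syzygy[\ell](V\otimes_\Lambda M)),\]
where the second isomorphism is the induction hypothesis. Here we use that $\syzygy$ of isomorphic modules are isomorphic; the induction hypothesis is an isomorphism of modules, not merely a stable one, so this is fine.

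The main obstacle is the middle step: identifying $V\otimes_\Lambda P(\syzygy[\ell]M)$ with an honest projective cover, rather than a projective cover plus extra projective summands. This needs the precise form of \cref{prop:projcoverRouquier} together with the computation $V\otimes_\Lambda P(V')^\ast\cong\delta_{VV'}k$, which uses that $V$ is simple and $k$ is algebraically closed (so $\End(V)=k$). Once that is established, minimality is automatic: the tensored map is an epimorphism whose source is isomorphic to the projective cover of its target, so comparing dimensions makes it a projective cover.
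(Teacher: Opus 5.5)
Your proof is correct, and it takes essentially the same approach as the paper. The paper gives no proof of its own and only cites \cite[Lemma 4.3]{Kozakai_Kunugi_2018}, but your argument (split the defining sequence of $\syzygy[\ell+1]M$ over $\Lambda$, tensor with $V$, and use \cref{prop:projcoverRouquier} to identify the middle term with exactly $P(V\otimes_\Lambda \syzygy[\ell]M)$) is the same reasoning the paper itself uses in \cref{s:main} to show that $S_i\otimes_A P^{\bullet}(M)$ is a minimal projective resolution. You also check one-sided projectivity of $\syzygy[\ell]M$ directly instead of invoking that $\syzygy[\ell]M$ induces a stable equivalence of Morita type, which is all that is needed.
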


The following proposition is proved similarly to those  propositions in \cite{Rouquier1998} and \cite{Kozakai_Kunugi_2018}. This is important to show that some complexes are two-sided tilting complexes.
\begin{proposition}[see {\cite[Sec.~10.3.4]{Rouquier1998} and \cite[Propositions 4.4 and 5.11]{Kozakai_Kunugi_2018}}]\label{prop:two-sided} Let $C=(C^{\ell}, d_C^{\ell})_{\ell \in \Z}$ be a bounded complex of $\Lambda^{\op}\otimes_k \Gamma$-modules such that $C^t=0$ for all $t>0$, $C^{0} = M$ and $C^t$ is projective for all $t<0$. If \[\Hom_{\derb(\Gamma)}(V\otimes_\Lambda C, W\otimes_\Lambda C[-n]) \cong \,\delta_{VW}\delta_{n0}k\] for $V,W\in \mathcal S'$ and $n\in \mathbb Z_{\ge 0}$, then $C$ is a two-sided tilting complex.
\end{proposition}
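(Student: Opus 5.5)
The plan is to show directly that $C$ is invertible in the derived category of bimodules. Since $C^0=M$ and each $C^t$ ($t<0$) is a projective bimodule, every term of $C$ is projective as a $\Lambda^{\op}$-module and as a $\Gamma$-module. Hence ${-}\otimes_\Lambda C$ agrees with ${-}\otimes^{\mathbb L}_\Lambda C$, and ${-}\otimes_\Gamma C^{\ast}$ is both a left and a right adjoint of it; for the right adjoint we use that $\Lambda,\Gamma$ are symmetric and the terms are projective on both sides. Put $X=C\otimes_\Gamma C^{\ast}\in\derb(\Lambda^{\op}\otimes_k\Lambda)$ with unit $\eta:\Lambda\to X$, and let $Y$ be its mapping cone. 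Choose a $(\Gamma,\Lambda)$-bimodule $N$ with $M\otimes_\Gamma N\cong\Lambda\oplus Q$, $Q$ projective. Up to such projective summands, $X$ is then $M\otimes_\Gamma M^{\ast}\cong \Lambda\oplus Q$ plus complexes of projective bimodules. So in the stable category of bimodules, or in $\derb/\homob(\projcat)$, the map $\eta$ is an isomorphism and $Y$ is a perfect complex of $(\Lambda,\Lambda)$-bimodules. The goal is $Y\cong 0$; this gives $\Lambda\cong C\otimes_\Gamma C^{\ast}$. The same argument applied to the counit (or to $C^{\ast}$) then gives $C^{\ast}\otimes_\Lambda C\cong\Gamma$, which by the characterization recalled after the definition of two-sided tilting complexes proves the claim.

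To show $Y=0$, note that $Y$ is a perfect bimodule complex, so it suffices that $V\otimes_\Lambda Y=0$ for every $V\in\mathcal S'$. Each $Z_V:=V\otimes_\Lambda Y$ is a perfect complex of $\Lambda$-modules. Taking $Z_V$ minimal, it vanishes as soon as $\Hom_{\derb(\Lambda)}(Z_V,W[m])=\Hom_\Lambda(Z_V^{-m},W)=0$ for all $W\in\mathcal S'$ and all $m\in\Z$. Since $\Lambda$ is symmetric and $Z_V$ is perfect, we have the duality $\Hom(Z_V,U)\cong D\Hom(U,Z_V)$. We apply $\Hom({-},W[m])$ to the triangle $V\to V\otimes_\Lambda X\to Z_V\to$ and use adjunction:
\[\Hom_{\derb(\Lambda)}(V\otimes_\Lambda X, W[m])\cong\Hom_{\derb(\Gamma)}(V\otimes_\Lambda C,W\otimes_\Lambda C[m]).\]
The task is then to prove that the induced map $\Hom(V,W[m])\to\Hom(V\otimes C,W\otimes C[m])$, i.e.\ the action of the functor ${-}\otimes_\Lambda C$, is an isomorphism for every $m$.

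For $m\le 0$ the hypothesis gives $\Hom(V\otimes C,W\otimes C[m])\cong\delta_{VW}\delta_{m0}k\cong\Hom(V,W[m])$. At $m=0$, $V=W$, the identity goes to the identity, which is nonzero, so the map is an isomorphism by dimension count. For $m>0$ I would use Serre duality in $\homob(\projcat\Gamma)$ together with Tate/stable arguments. For $m$ larger than the length of $C$, both sides are computed in the stable category: $\Hom(V\otimes C,W\otimes C[m])\cong\stHom(V\otimes M,\syzygy[-m](W\otimes M))$, because $V\otimes C$ differs from $V\otimes M$ only by projective terms. This agrees with $\stHom(V,\syzygy[-m]W)\cong\Hom(V,W[m])$ because ${-}\otimes_\Lambda M$ is a stable equivalence commuting with $\syzygy$ (\cref{prop:hellermove}). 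Combining these two ranges with the long exact sequence shows that $\Hom(Z_V,W[m])$ vanishes for $m\le 0$ and for $m\gg0$. The perfect-duality isomorphism then converts the vanishing at negative shifts into vanishing of $\Hom(W[m],Z_V)$, i.e.\ it controls the minimal terms of $Z_V$ from the other side.

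I expect the main obstacle to be the intermediate range $0<m\le$ (length of $C$). There, neither the hypothesis nor the stable category applies directly. My first attempt would be to use that $Z_V$ is a minimal perfect complex whose terms are confined to a bounded window: since $C^t=0$ for $t>0$, $X$ lives in degrees $[-\ell,\ell]$, where $\ell$ is the length of $C$. I would then combine the two vanishing statements, for $\Hom(Z_V,W[m])$ and for $\Hom(W[m],Z_V)$ via duality, so that they pin down the top and bottom nonzero terms of the minimal complex $Z_V$. An inductive argument on the outermost term then forces $Z_V=0$, in the spirit of \cite[Sec.~10.3.4]{Rouquier1998} and \cite[Proposition~4.4]{Kozakai_Kunugi_2018}.
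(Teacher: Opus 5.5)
Your reduction is sound as far as it goes: the cone $Y$ of $\eta$ is perfect, so it suffices to show that the minimal perfect complexes $Z_V=V\otimes_\Lambda Y$ vanish. But the decisive step is missing, and the tool you propose for it cannot supply it.

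\textbf{Why Serre duality on $Z_V$ does not help.} For a minimal perfect complex $Z$ over a symmetric algebra, the differentials are radical maps between projective-injective modules. They land in radicals and kill socles, so $\Hom(Z,W[m])\cong\Hom_\Lambda(Z^{-m},W)$ and $\Hom(W[m],Z)\cong\Hom_\Lambda(W,Z^{-m})$. Both are nonzero exactly when $P(W)$ is a summand of the same term $Z^{-m}$. Hence Serre duality on $Z_V$ only restates the vanishing you already have; it never reaches ``the other side''.

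\textbf{What your argument actually controls.} Applying $\Hom(-,W[m])$ with $m\le -1$ to the triangle proves $Z_V^i=0$ for $i\ge 1$. The stable argument for $m\gg 0$ is vacuous, because $Z_V$ is bounded anyway. The terms $Z_V^i$ with $i\le -1$, which is your intermediate range, remain uncontrolled.

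\textbf{The step at $m=0$.} Apply $\Hom(-,W)$ to the triangle of the unit $\eta$ and identify via the adjunction you use. The resulting map is $g\mapsto \epsilon_W\circ(g\otimes_\Gamma C^{\ast})\circ\eta_V$, where $\epsilon\colon C\otimes_\Gamma C^{\ast}\to\Lambda$ is the counit of the other adjunction. This is not, in general, the action of ${-}\otimes_\Lambda C$. So ``identity goes to identity'' does not apply, and you would need to know that $\epsilon\circ\eta\in Z(\Lambda)$ is a unit.

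\textbf{The missing idea.} Use the hypothesis a second time, with $V$ and $W$ interchanged, through the other adjunction. This is where the two-sidedness, equivalently the self-duality of $C\otimes_\Gamma C^{\ast}$, has to enter; the paper itself only cites Rouquier and Kozakai--Kunugi for this proof. Apply $\Hom(W,-)$ to $V\to V\otimes_\Lambda X\to Z_V\to V[1]$. Via $({-}\otimes_\Lambda C)\dashv({-}\otimes_\Gamma C^{\ast})$, you have $\Hom(W,V\otimes_\Lambda X[j])\cong\Hom(W\otimes_\Lambda C,V\otimes_\Lambda C[j])$. Now the map from $\Hom(W,V[j])$ really is $f\mapsto f\otimes_\Lambda C$, and the hypothesis makes it an isomorphism for every $j\le 0$. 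The long exact sequence then gives $\Hom_\Lambda(W,Z_V^j)\cong\Hom(W,Z_V[j])=0$ for $j\le -1$.

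\textbf{Finishing $Z_V=0$.} Together with $Z_V^i=0$ for $i\ge 1$, $Z_V$ is a projective module $P_V$ in degree $0$. The connecting map $P_V\to V[1]$ lies in an $\mathrm{Ext}^1$ out of a projective, so it vanishes and $V\otimes_\Lambda X\cong V\oplus P_V$. Then $\dim\Hom(V\otimes_\Lambda X,W)=\dim\Hom(V\otimes_\Lambda C,W\otimes_\Lambda C)=\delta_{VW}$ forces $P_V=0$.

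\textbf{The last step.} Your claim that ``the same argument applied to $C^{\ast}$'' gives $C^{\ast}\otimes_\Lambda C\cong\Gamma$ is not available, because the hypothesis only concerns simple $\Lambda$-modules. You need a separate argument. For example, once ${-}\otimes_\Lambda C$ is fully faithful, $\derb(\Gamma)$ splits orthogonally into its essential image and the kernel of ${-}\otimes_\Gamma C^{\ast}$, since that functor is both a left and a right adjoint. Indecomposability of $\Gamma$ then forces the kernel to be zero.
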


\section{Main results}\label{s:main}
In this section, we explicitly construct  a two-sided tilting complex corresponding to a tilting complex  without common projective summands. We take a bimodule inducing the stable equivalence of Morita type induced by the tilting complex. Then we take a minimal projective resolution of the bimodule and take a subcomplex of the resolution. We show that the subcomplex is a two-sided tilting complex.

Let $T=\bigoplus_{i=1}^n T_i$ be a basic tilting complex for an indecomposable symmetric algebra $A$, where each $T_i$ is indecomposable and not homotopy equivalent to $0$. We put $B=\End_{\homob(\projcat{A})}(T)$. We denote a triangulated  equivalence from $\derb(A)$ to $\derb(B)$ induced by the tilting complex $T$ by $F$. We denote a simple module corresponding to an indecomposable projective $B$-module $F(T_i)$ by $V_i$.
We make the following assumption on the derived equivalence induced by the tilting complex.
\begin{assumption}We assume that  image of each $S_i$ under the equivalence $F$ is isomorphic to a positive shift of a $B$-module.
\end{assumption}
For $i\in \{1,\dots, n\}$,  we set $n_i \ge 0$ and a $B$-module $W_i$ satisfying \[F(S_i)\cong W_i[n_i]\] in $\derb (B)$. In this situation, we remark that all the negative degrees of tilting complex $T$ are zero modules by \cref{prop:characterization}. By \cite{okuyama1998derived,Rickard1991}, there exists an $A^{\op}\otimes_k B$-module $M$ inducing a stable equivalence of Morita type satisfying
\[S_i\otimes_A M \cong \syzygy[-n_i] W_i \oplus (\text{projective}).\]

By \cite[Proposition 2.4 and Theorem 2.1 (ii)]{Linckelmann1996}, we may assume that $M$ is an indecomposable $A^{\op}\otimes_k B$-module and we have an isomorphism
\[
  S_i \otimes_A M \cong \syzygy[-n_i] W_i.
\]

Let $P^{\bullet}(M) = (P^j(M), p^j)_{j \in \mathbb Z}$ be a minimal projective resolution of the $A^{\op}\otimes_k B$-module $M$. That is, \[P^{-j}(M) \cong \begin{cases}
    M                 & (\text{for $j=0$}),    \\
    P(\syzygy[j-1] M) & (\text{for $0 < j$} ), \\
    0                 & (\text{for $j < 0$}).
  \end{cases}\]
By \cref{prop:projresol,prop:hellermove}, it holds that $P^{-j}(M)= P(\syzygy[j-1]M)$ is isomorphic to \[\bigoplus_{i=1}^n P(S_i)^{\ast}\otimes_k P(\syzygy[j-1-n_i] W_i)\]
for $j>0$.
Thus, for a simple $A$-module $S_i$ and for $j>0$, the $B$-module $S_i\otimes_A P^{-j}(M)$ is isomorphic to $P(\syzygy[j-1-n_i] W_i)$ because \[S_i\otimes_A P(S_{i'})^{\ast}\cong \Hom_A (P(S_{i'}), S_i)\cong \delta_{i'i}k.\] Since $P^{-j}(M)$ and $M$ are projective $A^{\op}$-modules, each short exact sequence of $A^{\op}\otimes_k B$-modules
\[\begin{tikzcd}[]
    0\rar[]&\syzygy[j] M\rar[]&P(\syzygy[j-1] M )\rar[]&\syzygy[j-1]M\rar[]&0
  \end{tikzcd}\]
splits as a short exact sequence of $A^{\op}$-modules.
Therefore, the exact sequence of $B$-modules
\[\begin{tikzcd}[]
    S_i\otimes_A \syzygy[j] M\rar[]&S_i\otimes_A P(\syzygy[j-1] M )\rar[]&S_i\otimes_A \syzygy[j-1]M\rar[]&0
  \end{tikzcd}\]
is a split short exact sequence of $k$-modules.
Thus $S_i\otimes_A P^{\bullet}(M)$ is an exact sequence, and hence a projective resolution of $S_i \otimes_A M $.
Moreover, since each graded component in the complex is minimal, this is in fact a minimal projective resolution of $S_i \otimes_A M $.

We construct a subcomplex $(C^j, p'^{j})_{j \in \mathbb Z}$ of $P^{\bullet}(M)$ as follows:
\[C^{-j} = \begin{cases}
    M                                                                                        & (\text{for $j=0$}),     \\
    \bigoplus_{i\in \{1,\dots, n\}, n_i\ge j} P(S_i)^{\ast}\otimes_k P(\syzygy[j-1-n_i] W_i) & (\text{for $0 < j $} ), \\
    0                                                                                        & (\text{for $j < 0$}).
  \end{cases}\]
For convenience of notation, we put $P^{-j}=P^{-j}(M)$.
Let $\iota^{-j}\colon C^{-j} \hookrightarrow P^{-j}$ be the inclusion and
$\pi^{-j}\colon P^{-j} \twoheadrightarrow C^{-j}$ the projection associated with
the direct summand $C^{-j}$ of $P^{-j}$.
We define a chain complex $C = (C^{j}, p'^{j})_{j \in \mathbb{Z}}$ by setting
\[
  p'^{-j} = \pi^{-j+1} \circ p^{-j} \circ \iota^{-j}.
\]
The following lemma holds.
\begin{lemma}\label{two-sided-image}
  For each $i$, we have $S_i\otimes_A C\cong F(S_i)$ in $\derb{(B)}.$
\end{lemma}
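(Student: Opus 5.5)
Fix $i$. The plan is to identify $S_i\otimes_A C$ with a truncation of the minimal projective resolution $S_i\otimes_A P^{\bullet}(M)$ of $S_i\otimes_A M\cong\syzygy[-n_i]W_i$, and then recognize that truncation as $W_i[n_i]$ in $\derb(B)$.

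First compute the terms of $S_i\otimes_A C$. Since $S_i\otimes_A P(S_{i'})^{\ast}\cong\delta_{i'i}k$, we get $S_i\otimes_A C^{-j}\cong P(\syzygy[j-1-n_i]W_i)$ when $0<j\le n_i$, and $0$ when $j>n_i$. The degree $0$ term is $S_i\otimes_A M\cong\syzygy[-n_i]W_i$. So $S_i\otimes_A C$ has the same terms as $S_i\otimes_A P^{\bullet}(M)$ in degrees $-n_i,\dots,0$ and vanishes elsewhere.

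Next compare the differentials. For $j\le n_i$, the map $S_i\otimes_A\iota^{-j}$ is an isomorphism, because the summands of $P^{-j}$ indexed by $i'\ne i$ are killed by $S_i\otimes_A-$. Likewise $S_i\otimes_A\pi^{-j+1}$ is an isomorphism for $j-1\le n_i$. Hence $S_i\otimes_A p'^{-j}$ is identified with $S_i\otimes_A p^{-j}$ for $0<j\le n_i$. The summands omitted from $C$ do not interfere, since they vanish after tensoring with $S_i$.

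It follows that $S_i\otimes_A C$ is the brutal truncation of the minimal projective resolution of $\syzygy[-n_i]W_i$ to degrees $-n_i,\dots,0$:
\[
0\to P(\syzygy[-1]W_i)\to\cdots\to P(\syzygy[-n_i+1]W_i)\to\syzygy[-n_i]W_i\to 0 .
\]
This is the one step needing care. The exactness established earlier for $S_i\otimes_A P^{\bullet}(M)$ shows that this complex is exact except at its leftmost term, where the cohomology is the image of $\syzygy[n_i]$ of $S_i\otimes_A M$, namely $\syzygy[n_i]\syzygy[-n_i]W_i$. Since $W_i$ is an indecomposable non-projective module over a symmetric algebra (it is the image of a simple under an equivalence, and it is non-projective unless $n_i=0$), we have $\syzygy[n_i]\syzygy[-n_i]W_i\cong W_i$. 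In the case $n_i=0$ this step is unnecessary, because the complex is just $S_i\otimes_A M\cong W_i$.

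Therefore $S_i\otimes_A C$ has cohomology $W_i$ concentrated in degree $-n_i$, so it is isomorphic to $W_i[n_i]\cong F(S_i)$ in $\derb(B)$. For the case $n_i>0$, one must check that $W_i$ has no projective summands, so that $\syzygy[n_i]\syzygy[-n_i]W_i\cong W_i$ holds exactly. This follows from \cref{prop:characterization}, which gives $W_i$ indecomposable, together with the fact that $W_i[n_i]$ with $n_i>0$ cannot be a projective module since $F^{-1}$ sends projectives to $\addcat(T)$.
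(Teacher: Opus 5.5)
Your proposal is correct and follows the paper's proof essentially step for step. You identify $S_i\otimes_A C$ with the brutal (stupid) truncation at degree $-n_i$ of the minimal projective resolution $S_i\otimes_A P^{\bullet}(M)$ of $S_i\otimes_A M$, using that $S_i\otimes_A\iota^{-j}$ and $S_i\otimes_A\pi^{-j+1}$ are isomorphisms because the discarded summands are killed by $S_i\otimes_A-$, and then read off $\Omega^{n_i}(\Omega^{-n_i}W_i)[n_i]\cong W_i[n_i]$.

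Two small remarks:
\begin{itemize}
\item In your displayed complex the degree $-1$ term should be $P(\Omega^{-n_i}W_i)$, not $P(\Omega^{-n_i+1}W_i)$.
\item The paper leaves implicit the check that $W_i$ is non-projective, and your phrasing of it is loose. The clean version: if $W_i$ were projective with $n_i>0$, then $S_i\otimes_A M\cong\Omega^{-n_i}W_i$ would be zero. This is impossible, since $-\otimes_A M$ is a stable equivalence and $S_i$ is not projective.
\end{itemize}
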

\begin{proof}
  We note that
  \[S_i\otimes_A C^{-j} \cong \begin{cases}
      \syzygy[-n_i] W_i       & (\text{for $j=0$}),                 \\
      P(\syzygy[j-1-n_i] W_i) & (\text{for $0 < j \le  n_i$} ),     \\
      0                       & (\text{for $j > n_i$ and $j < 0$}).
    \end{cases}\]
  and we have a commutative diagram \[\begin{tikzcd}[column sep=huge]
      S_i\otimes_A P^{-j}\rar["S_i\otimes_A p^{-j}"] &S_i\otimes_A P^{-j+1} \dar["S_i\otimes_A \pi^{-j+1}"]\\
      S_i\otimes_A C^{-j}\rar["S_i\otimes_A p'^{-j}"]\uar["S_i\otimes_A \iota^{-j}"] &S_i\otimes_A C^{-j+1}
    \end{tikzcd}.\]
  Let $D^{-j}$ be a complement of $C^{-j}$ in $P^{-j}$. The functor $S_i \otimes_A {-}$ sends a split short exact sequence \[\begin{tikzcd}[sep=large]
      0\rar[]&C^{-j}\rar["\iota^{-j}"]&P^{-j}\rar[]&D^{-j}\rar[]&0
    \end{tikzcd}\] to a split short exact sequence \[\begin{tikzcd}[sep=large]
      0\rar[]&S_i\otimes_A C^{-j}\rar["S_i\otimes_A \iota^{-j}"]&S_i\otimes_A P^{-j}\rar[]&S_i\otimes_A D^{-j}\rar[]&0
    \end{tikzcd}\] and similarly sends \[\begin{tikzcd}[sep=large]
      0\rar[]&D^{-j+1}\rar[]&P^{-j+1}\rar["\pi^{-j+1}"]&C^{-j+1}\rar[]&0
    \end{tikzcd}\] to  \[\begin{tikzcd}[sep=large]
      0\rar[]& S_{i}\otimes_A D^{-j+1}\rar[]&S_i\otimes_A P^{-j+1}\rar["S_i\otimes_A \pi^{-j+1}"]&S_i\otimes_A C^{-j+1}\rar[]&0.
    \end{tikzcd}\]
  By the definition of $D^{-j}$, we have \[D^{-j} =
    \bigoplus_{i\in \{1,\dots, n\}, n_i< j} P(S_i)^{\ast}\otimes_k P(\syzygy[j-1-n_i] W_i)\]
  for $0< j$. Therefore, if $0< j \le n_i$, \[S_i\otimes_A D^{-j} \cong  0 \qquad  \text{and} \qquad S_i\otimes_A D^{-j+1}\cong 0. \] Hence, the split morphisms $S_i\otimes_A \iota^{-j}$ and $S_i\otimes_A \pi^{-j+1}$ are isomorphisms. By the commutative diagram above, the complex $S_i\otimes_A C $ is a stupid truncated minimal projective resolution of $S_i\otimes_A M$ at the $-n_i$th degree. Therefore, \[S_i\otimes_A C\cong \syzygy[n_i](S_i\otimes_A M)[n_i] \cong \syzygy[n_i](\syzygy[-n_i] W_i)[n_i] \cong W_i[n_i]\cong F(S_i)\] in $\derb(B)$.
\end{proof}
By \cref{two-sided-image}, for $\ell\le 0$, \begin{align*}
  \Hom_{\derb(B)}(S_i\otimes_A C, S_j\otimes_A C[\ell]) & \cong \Hom_{\derb(B)}(F(S_i), F(S_j)[\ell]) \\ & \cong \Hom_{\derb(A)}(S_i, S_j[\ell])\\ & \cong \delta_{ij}\delta_{\ell0}k.
\end{align*}
By \cref{prop:two-sided}, we have the following proposition.

\begin{theorem}\label{thm:Cistwo-sided}
  The complex $C$ of $A^{\op}\otimes_k B$-modules is a two-sided tilting complex.
\end{theorem}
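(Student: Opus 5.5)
The plan is to apply \cref{prop:two-sided} with $\Lambda = A$, $\Gamma = B$ and the complex $C$ constructed above. The roles match: $C$ is a complex of $A^{\op}\otimes_k B$-modules, so ${-}\otimes_A C$ goes from $\derb(A)$ to $\derb(B)$. The simple $A$-modules $S_1,\dots,S_n$ form the set $\mathcal S'$.

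First I check the structural hypotheses of \cref{prop:two-sided}.
\begin{itemize}
\item By construction $C^{-j}=0$ for $j<0$, so $C^t=0$ for all $t>0$.
\item $C^0=M$, and $M$ induces a stable equivalence of Morita type by the choice made above.
\item For $0<j$, the term $C^{-j}$ is a direct sum of bimodules $P(S_i)^{\ast}\otimes_k P(\syzygy[j-1-n_i] W_i)$, which are projective $A^{\op}\otimes_k B$-modules.
\item $C$ is bounded, since $C^{-j}=0$ once $j>\max_i n_i$.
\end{itemize}
I also verify that $C$ is genuinely a complex, i.e.\ $p'^{-j+1}\circ p'^{-j}=0$. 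I would argue this through the description of $C$ as the summand of $P^\bullet(M)$ on which $S_i\otimes_A{-}$ sees exactly the stupid truncation, or simply by noting that $C$ is a quotient complex of the truncated resolution. If that is delicate, the fallback is that $\bigoplus_{j\le -1} D^{j}$ (restricted to the summands with $n_i<j$) forms a subcomplex of $P^\bullet(M)$, because $p^{-j}$ maps the $P(S_i)^{\ast}$-isotypic part into parts indexed by the same $i$ after applying $S_{i'}\otimes_A{-}$. The truncation at $-n_i$ is then compatible with the differential.

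Next I verify the Hom condition. By \cref{two-sided-image}, $S_i\otimes_A C\cong F(S_i)$ in $\derb(B)$ for each $i$. Since $F$ is a triangulated equivalence, for $\ell\le 0$ we have
\[
\Hom_{\derb(B)}(S_i\otimes_A C, S_j\otimes_A C[\ell])\cong\Hom_{\derb(A)}(S_i,S_j[\ell]).
\]
For $\ell<0$ this vanishes because $S_i,S_j$ are modules concentrated in degree $0$. For $\ell=0$ it equals $\Hom_A(S_i,S_j)\cong\delta_{ij}k$ by Schur's lemma over the algebraically closed field $k$. Setting $n=-\ell\ge 0$, this is exactly $\delta_{VW}\delta_{n0}k$, so \cref{prop:two-sided} yields that $C$ is a two-sided tilting complex.

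I expect the main obstacle to be the well-definedness of $C$ as a complex (that the compressed differentials $p'$ square to zero), since everything else is a formal consequence of \cref{two-sided-image} and \cref{prop:two-sided}. I would handle it by showing that the complements $D^{-j}$ assemble into a subcomplex of $P^\bullet(M)$, making $C$ the quotient complex. This holds provided $p^{-j}(D^{-j})\subseteq D^{-j+1}$. To see this, apply $S_i\otimes_A{-}$ for $i$ with $n_i\ge j-1$: we have $S_i\otimes_A D^{-j}=0$, and since $P(S_i)^\ast\otimes_k-$ summands are detected by $S_i\otimes_A-$, the $C^{-j+1}$-component of $p^{-j}|_{D^{-j}}$ vanishes.
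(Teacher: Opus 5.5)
Your main line is the paper's proof. You check the shape hypotheses of \cref{prop:two-sided}, then get the Hom condition from \cref{two-sided-image}, the fact that $F$ is an equivalence, and $\Hom_{\derb(A)}(S_i,S_j[\ell])\cong\delta_{ij}\delta_{\ell0}k$ for $\ell\le0$. That part is correct and complete in the sense the paper intends.

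The extra argument you give for $p'^{-j+1}\circ p'^{-j}=0$ is wrong, and you should drop or replace it. The paper gives no argument here; it simply takes $C$ to be a complex, indeed a ``subcomplex'' of $P^{\bullet}(M)$, as part of the construction.

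The complements $D^{-j}$ do not form a subcomplex. Two cases show this:
\begin{itemize}
\item $D^0=0$, yet $p^{-1}|_{D^{-1}}\neq0$ as soon as some $n_i=0$, because $p^{-1}$ is a projective cover of $M$.
\item If $n_i=j-1\ge1$, then $S_i\otimes_A C^{-j}=0=S_i\otimes_A D^{-j+1}$. So $S_i\otimes_A(\pi^{-j+1}p^{-j}|_{D^{-j}})$ is the differential $P(W_i)\to P(\syzygy[-1]W_i)$ of the minimal projective resolution of $S_i\otimes_A M$. This is nonzero, since the resolution is minimal and $P(W_i)\neq0$.
\end{itemize}
Your claim that $S_i\otimes_A D^{-j}=0$ for $n_i\ge j-1$ fails exactly at $n_i=j-1$; it needs $n_i\ge j$.

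More fundamentally, tensoring with simple modules cannot see the components that matter. Since $\iota^{-j+1}\pi^{-j+1}$ is the idempotent onto $C^{-j+1}$ and $p^{-j+1}p^{-j}=0$, for $j\ge2$ you get
\[
p'^{-j+1}\circ p'^{-j}=-\pi^{-j+2}\circ p^{-j+1}\circ\bigl(1-\iota^{-j+1}\pi^{-j+1}\bigr)\circ p^{-j}\circ\iota^{-j}.
\]
This is minus the composite $C^{-j}\to D^{-j+1}\to C^{-j+2}$ of components of $p$. Here $C^{-j}$ involves only indices with $n_i\ge j$, and $D^{-j+1}$ only indices with $n_{i'}\le j-2$. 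So every component of the first map lies in $\Hom_A(P(S_{i'}),P(S_i))\otimes_k\Hom_B(X,Y)$ with $i\neq i'$, for the relevant projective $B$-modules $X,Y$. Every functor $S_{i''}\otimes_A-$ kills such a component. Your assertion that $p^{-j}$ preserves the index $i$ is therefore unjustified, and detection by simples cannot supply it.

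Nothing in the construction forces this composite to vanish for an arbitrary choice of the summands $C^{-j}\subseteq P^{-j}$. It does vanish if they are chosen with $p^{-j}(C^{-j})\subseteq C^{-j+1}$, so that $C$ is an honest subcomplex. That is what the paper's construction presupposes, and it is a property of the choice, not a consequence of your argument.
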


Let $G={-}\otimes_A C$. By \cref{thm:Cistwo-sided}, the functor $G$ is a triangulated equivalence. Since $(F^{-1}\circ G )(S_i)\cong S_i$ by \cref{two-sided-image}, the functor $F^{-1}\circ G$ sends the projective module $P(S_i)$ to itself. In particular, $(F^{-1}\circ G)(A)\cong A$.
Thus we have \[A\otimes_A C\cong G(A)\cong F(A)\] in $\derb{(B)}$. Hence, we have the following proposition.
\begin{proposition}\label{prop:invtwosided}
  The complex $C$ is isomorphic to the tilting complex $F(A)$  in $\derb{(B)}$.
\end{proposition}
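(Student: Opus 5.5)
The statement to be proved is \cref{prop:invtwosided}: $C$, restricted to $B$, is isomorphic to $F(A)$ in $\derb(B)$. Much of the argument is sketched just before the statement; I will make each step precise.

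\textbf{Step 1: $G$ is an equivalence and agrees with $F$ on simples.} Put $G={-}\otimes_A C$. By \cref{thm:Cistwo-sided} and \cref{prop:omittingL}, $G$ is a triangulated equivalence $\derb(A)\to\derb(B)$. The derived tensor may be replaced by the ordinary one, since each $C^{-j}$ is projective on both sides: $M$ is, and the other terms are summands of projective bimodules. The composite $E=F^{-1}\circ G$ is then a triangulated autoequivalence of $\derb(A)$. By \cref{two-sided-image}, $E(S_i)\cong S_i$ for every $i$.

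\textbf{Step 2: $E$ fixes each indecomposable projective.} I characterize $P(S_i)$ in $\derb(A)$ by Hom conditions against the simples:
\[
\Hom_{\derb(A)}(P(S_i),S_j[\ell])\cong\delta_{ij}\delta_{\ell0}k .
\]
Any equivalence preserves these conditions, so $E(P(S_i))=:X$ satisfies $\Hom(X,S_j[\ell])\cong\delta_{ij}\delta_{\ell 0}k$, since $E$ fixes the $S_j$ up to isomorphism.

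Now replace $X$ by a minimal (radical) bounded-above complex of projectives; it is bounded because $X$ is perfect, as equivalences preserve compact objects. For such a complex, $\Hom(X,S_j[\ell])\cong\Hom_A(X^{-\ell},S_j)$, since all differentials become zero after applying $\Hom(-,S_j)$. Hence $X^{-\ell}=0$ for $\ell\neq0$ and $X^0\cong P(S_i)$.

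This is the step needing the most care. The key point is that, by minimality, the differentials of $X$ have image in the radical, so the Hom computation above is valid.

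\textbf{Step 3: conclude.} Taking direct sums gives $E(A)\cong A$, so $G(A)\cong F(A)$. Finally, $G(A)=A\otimes_A C\cong C$ as complexes of $B$-modules. Therefore $C\cong F(A)$ in $\derb(B)$, where $F(A)$ is a tilting complex of $B$-modules since $F$ is an equivalence. This proves \cref{prop:invtwosided}.
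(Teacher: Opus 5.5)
Your proof is correct and follows the same route as the paper: form the autoequivalence $F^{-1}\circ G$, use \cref{two-sided-image} to see that it fixes every simple $S_i$, deduce that it fixes each $P(S_i)$ and hence $A$, and conclude that $C\cong A\otimes_A C\cong F(A)$ in $\derb(B)$. Your Step~2 supplies the standard justification, via minimal projective complexes and $\Hom$ into shifted simples, for a step the paper asserts without proof; the appeal to perfectness there is not needed, since the $\Hom$ computation on a minimal bounded-above projective resolution of $X$ already forces every term outside degree $0$ to vanish.
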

Let $Y$ be a two-sided tilting complex which restricts to $T^{\ast}$ as $A^{\op}$-modules and to $F(A)$ as $B$-modules. Such $Y$ exists by \cref{prop:correspond}. By \cite[Proposition 2.3.]{Rouquier_Zimmermann_2003}, there exists a $k$-algebra automorphism $\alpha$ of $A$ such that ${}_{\alpha}A_{1}\!\otimes_A C\cong Y$ in $\derb(A^{op})$. By taking dual complexes, we have $C^{\ast}\otimes_A {}_{1}A_{\alpha} \cong Y^{\ast}\cong T$ in $\derb(A)$.
Therefore, we have the following theorem.
\begin{theorem}\label{thm:dualtwosided}
  There exists a $k$-algebra automorphism $\alpha$ of the algebra $A$ such that the complex $C^{\ast}\otimes_{A}{}_{1}\!A_{\alpha}$ is isomorphic to the tilting complex $T$ in $\derb{(A)}$.
\end{theorem}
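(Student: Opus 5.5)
The plan is to deduce \cref{thm:dualtwosided} from \cref{prop:invtwosided} and a uniqueness result for two-sided tilting complexes with the same restriction. The argument has three steps.

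\textbf{Step 1: a comparison complex.} By \cref{prop:correspond} applied to $T$ and $F$, there is a two-sided tilting complex $Y$ of $A^{\op}\otimes_k B$-modules. Its restriction to $A$ is $T$; in the conventions of this section, with $C$ a complex of $(A,B)$-bimodules, we take $Y$ restricting to $T^{\ast}$ on the $A^{\op}$ side and to $F(A)$ on the $B$ side.

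\textbf{Step 2: compare $C$ and $Y$ on the $B$ side.} By \cref{prop:invtwosided}, $C\cong F(A)$ in $\derb(B)$, so $C$ and $Y$ are two-sided tilting complexes with isomorphic restrictions to $B$. Consider the self-equivalence of $\derb(A)$ given by $Y\otimes^{\mathbb L}_B C^{\ast}$. It sends $A$ to a complex isomorphic to $A$, since both $C$ and $Y$ send $A$ to $F(A)$, and hence it is a standard equivalence fixing the regular module. The result of Rouquier--Zimmermann \cite[Proposition 2.3.]{Rouquier_Zimmermann_2003} then says that such a two-sided tilting complex is isomorphic to a twisted bimodule ${}_{\alpha}A_1$ for some $k$-algebra automorphism $\alpha$ of $A$. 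Consequently
\[
{}_{\alpha}A_1\otimes_A C\cong Y
\]
in $\derb(A^{\op}\otimes_k B)$, and in particular in $\derb(A^{\op})$.

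\textbf{Step 3: dualize.} The components of $C$ are projective on both sides, so $(-)^{\ast}$ commutes with the tensor product up to the relabeling $({}_{\alpha}A_1)^{\ast}\cong{}_1A_{\alpha}$ recorded in the Notation section. Taking $A$-duals gives
\[
C^{\ast}\otimes_A{}_1A_{\alpha}\cong Y^{\ast},
\]
and $Y^{\ast}$ restricted to $A$ is $(T^{\ast})^{\ast}\cong T$, because $T$ is a bounded complex of finitely generated projectives.

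\textbf{Main obstacle.} The delicate point is Step 2: applying the Rouquier--Zimmermann result requires care about which side the twist lands on and about passing from isomorphism in $\derb(B)$ to a bimodule-level statement. The order of composition must be chosen so that the automorphism acts on $A$ rather than on $B$. I would first check this by verifying that the composite $F^{-1}\circ G$ from the argument before \cref{prop:invtwosided}, which fixes each $S_i$, is given by tensoring with ${}_{\alpha}A_1$, up to the choice of side.
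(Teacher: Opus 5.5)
Your proposal is correct and follows essentially the same route as the paper. You use the same auxiliary complex $Y$ from \cref{prop:correspond}, the same Rouquier--Zimmermann result (which you usefully unpack via the self-equivalence $Y\otimes^{\mathbb L}_B C^{\ast}$ fixing $A$), and the same dualization using $({}_{\alpha}A_1)^{\ast}\cong {}_1A_{\alpha}$; your argument even yields ${}_{\alpha}A_1\otimes_A C\cong Y$ in $\derb(A^{\op}\otimes_k B)$, which is stronger than the isomorphism in $\derb(A^{\op})$ that the paper needs before dualizing.
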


We remark that the assumption $n_i \ge 0$ for all $i\in \{1, \dots, n\}$ is not essential. If there are some $i$ such that $n_i<0$, then we denote the minimum of $n_i$ for $i\in \{1, \dots, n\}$ by $N$. Then the bimodule $\syzygy[-N] M $ induces a stable equivalence of Morita type by \cite[2.3.5. Comparison]{Rouquier2001a}. We can do the same argument for the complex of $A^{\op}\otimes_k B$-modules which of the $N$th degree is isomorphic to $\syzygy[-N] M$.

The following corollary is obtained by
combining \cref{thm:dualtwosided} with \cref{prop:characterization}.
\begin{corollary}
  For a tilting complex $T=(T^{\ell}, d^{\ell})_{\ell \in \mathbb Z}$ satisfying $\addcat{T^{\ell}} \cap \addcat{T^{\ell'}} = 0$ for all $\ell \neq \ell'$, we can construct a corresponding two-sided tilting complex by deleting some indecomposable summands of a minimal projective resolution of some bimodule inducing a stable equivalence of Morita type.
\end{corollary}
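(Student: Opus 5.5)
The plan is to reduce the corollary to the two facts already established: \cref{prop:characterization}, which turns the condition on $T$ into the assumption of this section, and \cref{thm:dualtwosided}, which says that the truncated resolution $C$ corresponds to $T$ up to a twist.

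First, normalise $T$. Replacing $T$ by a shift $T[m]$ does not change the condition $\addcat{T^{\ell}}\cap\addcat{T^{\ell'}}=0$. It also does not change the endomorphism algebra $B$. We may also assume $T$ is basic and $A$ indecomposable, since the condition only involves which indecomposable projectives occur. By \cref{prop:characterization}, each $F(S_i)$ is isomorphic to $W_i[n_i]$ for an indecomposable $B$-module $W_i$. Choosing the shift suitably makes all $n_i\ge 0$, so the standing assumption holds. Equivalently, we can use the remark after \cref{thm:dualtwosided} and replace $M$ by $\syzygy[-N]M$ with $N=\min_i n_i$.

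Second, apply the construction. By \cite{okuyama1998derived,Rickard1991} and \cite{Linckelmann1996}, there is an indecomposable bimodule $M$ inducing a stable equivalence of Morita type with $S_i\otimes_A M\cong\syzygy[-n_i]W_i$. Take its minimal projective resolution $P^\bullet(M)$, with terms as in \cref{prop:projresol}. In each degree $-j$, delete the summands $P(S_i)^{\ast}\otimes_k P(\syzygy[j-1-n_i]W_i)$ with $n_i<j$. This is exactly the complex $C$, so by \cref{thm:Cistwo-sided} it is a two-sided tilting complex.

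Third, show that $C$ corresponds to $T$ in the sense of \cref{prop:correspond}.
\begin{itemize}
\item On the $B$-side, \cref{prop:invtwosided} gives $C\cong F(A)$, which is the required restriction (after dualising, since the paper's conventions put $F(\Gamma)^{\ast}$ on one side).
\item On the $A$-side, \cref{thm:dualtwosided} gives $C^{\ast}\otimes_A{}_1A_\alpha\cong T$.
\end{itemize}
The twist by ${}_1A_\alpha$ is harmless for the corollary. Tensoring $C^{\ast}$ (or $C$) with the invertible bimodule ${}_1A_\alpha$ is again a two-sided tilting complex, and it differs from $C$ only by relabelling the $A$-action. It can still be described as a truncation of the minimal projective resolution of the bimodule ${}_{\alpha}M$ or $M_\alpha$, which also induces a stable equivalence of Morita type. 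Hence the corresponding two-sided tilting complex is obtained by deleting summands from a minimal projective resolution of \emph{some} bimodule inducing a stable equivalence of Morita type, as claimed. Finally, undo the shift by $m$, which just shifts the complex.

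The main obstacle is this last bookkeeping. We must check that twisting by $\alpha$ and dualising commute with forming the minimal projective resolution and with the deletion of summands. Concretely, ${}_{\alpha}(-)$ is an exact autoequivalence of bimodules that preserves projectivity and minimality. I would verify this first by observing that $P(S_i)^{\ast}$ is sent to $P(S_{\alpha(i)})^{\ast}$, so the deleted summands are permuted consistently.
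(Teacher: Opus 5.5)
Your argument follows the paper's own route. \cref{prop:characterization} supplies the standing assumption, and your shift of $T$ does the job of the paper's remark on $n_i<0$. Then \cref{thm:Cistwo-sided}, \cref{prop:invtwosided} and \cref{thm:dualtwosided} give the correspondence up to the twist. You absorb that twist correctly by noting that ${}_{\alpha}C\cong{}_{\alpha}A_{1}\otimes_A C$ is the same truncation of ${}_{\alpha}P^{\bullet}(M)$, which is a minimal projective resolution of ${}_{\alpha}M$.

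There is one slip in your last paragraph. Dualising does not commute with forming minimal projective resolutions: the dual of $P^{\bullet}(M)$ is a minimal injective coresolution of $M^{\ast}$, and $M_\alpha$ makes no sense for an $(A,B)$-bimodule. So the complex that actually restricts to $T$ is the dual $({}_{\alpha}C)^{\ast}\cong C^{\ast}\otimes_A{}_{1}A_{\alpha}$ of such a truncation, not a truncation itself. This matches the loose reading of ``corresponding'' that the paper's corollary already relies on.
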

\begin{remark}
  For Brauer tree algebras, tilting complexes in \cite{Rickard198911} and  tilting complexes in \cite{Rouquier_Zimmermann_2003} satisfy the conditions in \cref{prop:characterization}.

  For generalized Brauer tree algebras, tilting complexes in \cite{MembrilloHernandez1997} satisfy the conditions in \cref{prop:characterization}.

  For arbitrary finite dimensional   algebras, two-term tilting complexes satisfy the conditions in \cref{prop:characterization} by \cite[Proposition 2.5 and 3.6]{Adachi_Iyama_Reiten_2014}.

  We may regard that we generalize the construction method of two-sided tilting complexes in \cite{Kozakai_Kunugi_2018} corresponding to Rickard tree-to-star  tilting complexes for Brauer tree algebras, \cite{Kozakai2019} corresponding to Rickard--Schaps tree-to-star tilting complexes for Brauer tree algebras and \cite{Fujino_Kozakai_Takamura_2025} for Membrillo-Hernandez tree-to-star tilting complexes for generalized Brauer tree algebras.
\end{remark}

\begin{example}
  Let $A$ be the Brauer graph algebra associated with the following Brauer graph, with counterclockwise cyclic order around each vertex and all multiplicities equal to $1$.
  \tikzset{block/.style={circle,draw,minimum width=7, line width=0,minimum height=0,inner sep=1.0}}
  \tikzset{block1/.style={circle, white, fill=black,minimum width=0, line width=0,minimum height=0,inner sep=1.0}}
  \tikzset{block2/.style={circle, white, fill=black,minimum width=.8, line width=0,minimum height=0,inner sep=1.0}}
  \[\begin{tikzpicture}
      \node (A) at (1,1)[block, ]{};
      \node (B) at (-1,1)[block]{};
      \node (C) at (-1,-1)[block]{};
      \node (D) at (1,-1)[block]{};
      \path[]
      (A) edge node[above] {$1$} (B)
      (B) edge node[left] {$2$} (C)
      (C) edge node[below] {$3$} (D)
      (D) edge node[right] {$4$} (A);
    \end{tikzpicture}\]
  Silting mutation, introduced in \cite{Aihara_Iyama_2012}, provides a way to construct new silting objects from given ones in a triangulated category.
  We apply the right silting mutation to the tilting complex $A$,
  first with respect to $\{1\}$, and then with respect to $\{1,2,4\}$.
  As a result, we have the tilting complex $T=\bigoplus_{i=1}^4 T_i $ as follows:
  \[
    \begin{tikzcd}[ampersand replacement=\&, row sep=small, column sep=small ]
      \phantom{0}\&[-6pt]\phantom{0}\&[-5pt]\phantom{0} \& 0\text{th} \& 1\text{st} \& 2\text{nd}\\[-5pt]
      \&\&T_1\rar[":",phantom]\&P_3\oplus P_3\rar[]\&P_2\oplus P_4 \rar[]\& P_1\\
      \phantom{0}\&\&T_2\rar[":",phantom]\&P_3\rar[]\&P_2\rar[]\& 0\\
      \phantom{0}\&\&T_3\rar[":",phantom]\&P_3\rar[]\&0\rar[]\&0\\
      \&\&T_4\rar[":",phantom]\&P_3\rar[]\&P_4\rar[]\&0
    \end{tikzcd}
  \]
  Generalized Kauer move, introduced in \cite{Soto2024}, provides a way to calculate the endomorphism algebra of a tilting complex obtained by a silting mutation of a regular module over a Brauer graph algebra.
  The endomorphism algebra $B=\End_{\homob({\projcat A})}(T)$ is the Brauer graph algebra associated with the following Brauer graph:
  \tikzset{block/.style={circle,draw,minimum width=7, line width=0,minimum height=0,inner sep=1.0}}
  \tikzset{block1/.style={circle, white, fill=black,minimum width=0, line width=0,minimum height=0,inner sep=1.0}}
  \tikzset{block2/.style={circle, white, fill=black,minimum width=.8, line width=0,minimum height=0,inner sep=1.0}}
  \[\begin{tikzpicture}
      \node (A) at (1,1)[block, ]{};
      \node (B) at (-1,1)[block]{};
      \node (C) at (-1,-1)[block]{};
      \node (D) at (1,-1)[block]{};
      \path[]
      (B) edge node[left] {$2$} (D)
      (C) edge node[below] {$3$} (D);
      \draw (C) .. controls (-2,3.5) .. (D) node[midway, above right] {$1$};
      \draw (A) .. controls (2,-2.5) .. (C) node[midway, above right] {$4$};
    \end{tikzpicture}\]
  Following \cref{prop:precharacterization}, the images of simple modules $S_i$ through the derived equivalence $F$ induced by the tilting complex $T$ are as follows:
  \[F(S_1)\cong W_1[2],\quad F(S_2)\cong W_2[1],\quad F(S_3)\cong W_3,\quad F(S_4)\cong W_4[1].\]
  Here we put indecomposable modules $W_1,\dots,W_4$ by writing the composition factors from top to socle vertically as follows:
  \[W_1= V_1,\quad W_2= \,\begin{matrix}
      V_2 \\V_3
    \end{matrix}\,,\quad W_3= \,
    \begin{matrix}
      \begin{matrix}
        V_1 \\V_4
      \end{matrix}\quad \begin{matrix}
                          V_1 \\V_2
                        \end{matrix} \\V_3
    \end{matrix}\,
    ,\quad W_4= \,\begin{matrix}
      V_4 \\V_3
    \end{matrix}\,,\]
  where $V_1,\dots,V_4$ denote simple $B$-modules.
  Let $M$ be an $A^{\op}\otimes_k B$-module and induce a stable equivalence of Morita type satisfying \[S_1\otimes_A M\cong \syzygy[-2]W_1,\quad S_2\otimes_A M\cong \syzygy[-1]W_2,\quad S_3\otimes_A M\cong W_3,\quad S_4\otimes_A M \cong \syzygy[-1]W_4.\]
  Let $P^{\bullet}(M)$ be a minimal projective resolution of $M$.
  \[
    \begin{tikzcd}[row sep=0cm]
      \phantom{\cdots\cdots\cdots\cdots} &-2\text{nd}&-1\text{st}& 0\text{th}\\[-0.1cm]
      \phantom{\cdots} & {P(S_1)^{\ast}\otimes_k P(\syzygy[-1] W_1)}& P(S_1)^{\ast}\otimes_k P(\syzygy[-2] W_1)& \phantom{0}\\[-0.1cm]
      &|[opacity=1]|\oplus & \oplus &\\[-0.1cm]
      \phantom{\cdots}& |[opacity=1]|{P(S_2)^{\ast}\otimes_k P( W_2)}& P(S_2)^{\ast}\otimes_k P(\syzygy[-1] W_2)\\[-0.1cm]
      |[opacity=1]|{\cdots}\rar[shorten=30pt,opacity=1, ] &|[opacity=1]|{\oplus}\rar[shorten=35pt] & \oplus\rar[shorten <= 40pt,] &M\\[-0.1cm]
      \phantom{\cdots}& |[opacity=1]|{P(S_3)^{\ast}\otimes_k P(\syzygy[] W_3)}& |[opacity=1]|{P(S_3)^{\ast}\otimes_k P( W_3)}& \phantom{0}\\[-0.1cm]
      &|[opacity=1]|{\oplus}  & |[opacity=1]|{\oplus} &\\[-0.1cm]
      \phantom{\cdots}&|[opacity=1]|{P(S_4)^{\ast}\otimes_k P( W_4)}& P(S_4)^{\ast}\otimes_k P(\syzygy[-1] W_4)& \phantom{0}
    \end{tikzcd}
  \]
  Following our construction method of our two-sided tilting complexes, we delete some direct summands. Then we have a two-sided tilting complex $C$ of $A^{\op}\otimes_k B$-modules.
  \[
    \begin{tikzcd}[row sep=0cm]
      \phantom{\cdots\cdots\cdots\cdots} &-2\text{nd}&-1\text{st}& 0\text{th}\\[-0.1cm]
      \phantom{\cdots} & {P(S_1)^{\ast}\otimes_k P(\syzygy[-1] W_1)}& P(S_1)^{\ast}\otimes_k P(\syzygy[-2] W_1)& \phantom{0}\\[-0.1cm]
      &|[opacity=0]|\oplus & \oplus &\\[-0.1cm]
      \phantom{\cdots}& |[opacity=0]|{P(S_2)^{\ast}\otimes_k P( W_2)}& P(S_2)^{\ast}\otimes_k P(\syzygy[-1] W_2)\\[-0.1cm]
      |[opacity=0]|{\cdots}\rar[shorten=30pt,opacity=0, ] &|[opacity=0]|{\oplus}\rar[shorten=35pt] & \oplus\rar[shorten <= 40pt,] &M\\[-0.1cm]
      \phantom{\cdots}& |[opacity=0]|{P(S_3)^{\ast}\otimes_k P(\syzygy[] W_3)}& |[opacity=0]|{P(S_3)^{\ast}\otimes_k P( W_3)}& \phantom{0}\\[-0.1cm]
      &|[opacity=0]|{\oplus}  & |[opacity=0]|{\oplus} &\\[-0.1cm]
      \phantom{\cdots}&|[opacity=0]|{P(S_4)^{\ast}\otimes_k P( W_4)}& P(S_4)^{\ast}\otimes_k P(\syzygy[-1] W_4)& \phantom{0}
    \end{tikzcd}
  \]
  The dual complex $C^{\ast}$ is also a two-sided tilting complex of $B^{\op}\otimes_k A$-modules, which restricts to the tilting complex $T\otimes_{A}{}_{\alpha}\!A_{1}$ of $A$-modules for a $k$-algebra automorphism $\alpha$ of $A$.

  \[
    \begin{tikzcd}[
        row sep = 0
      ]
      & 0\text{th}
      & 1\text{st}
      & 2\text{nd}\\
      & \phantom{0}
      & P(\syzygy[-2] W_1)^{\ast}\otimes_k P(S_1)
      & P(\syzygy[-1] W_1)^{\ast}\otimes_k P(S_1)
      \\
      &
      & \oplus
      & |[opacity=0]| \oplus
      \\
      &
      & P(\syzygy[-1] W_2)^{*} \otimes_k P(S_2)
      & |[opacity=0]| {P(W_2)^{*} \otimes_k P(S_2)}
      \\
      & M^{\ast}
      \arrow[r, shorten >= 50pt]
      & \oplus
      \arrow[r, shorten = 35pt]
      & |[opacity=0]| {\oplus}
      \\
      &
      & |[opacity=0]| \oplus
      & |[opacity=0]| \oplus
      & \phantom{0} \\
      & |[opacity=0]| {P(W_3)^{*} \otimes_k P(S_3)}
      & |[opacity=0]| {P(\syzygy W_3)^{*} \otimes_k P(S_3)}
      \\
      & \phantom{0}
      & P(\syzygy[-1]W_4)^{*} \otimes_k P(S_4)
      & |[opacity=0]| {P(W_4)^{*} \otimes_k P(S_4)}
    \end{tikzcd}
  \]
\end{example}

\section*{Acknowledgements}
The authors would like to thank Professor Naoko Kunugi for her advice. This work was supported by JSPS KAKENHI Grant Number JP25K17238.

\vspace{\baselineskip}
\par\noindent
Shuji Fujino~~1124702@ed.tus.ac.jp\par\noindent
Department of Mathematics, Graduate School of Science, Tokyo University of Science, 1-3 Kagurazaka, Shinjuku-ku, Tokyo, 162-8601, Japan
\vspace{\baselineskip}
\par\noindent
Yuta Kozakai~~kozakai@rs.tus.ac.jp\par\noindent
Tokyo University of Science, 1-3 Kagurazaka, Shinjuku-ku, Tokyo 162-8601, Japan
\vspace{\baselineskip}

\end{document}